  \theoremstyle{plain}
  \newtheorem{theorem}{Theorem}
  \newtheorem{lemma}{Lemma}
  \newtheorem{proposition}{Proposition}
  \newtheorem{assumption}{Assumption}
  \newtheorem{definition}{Definition}
  \newtheorem{remark}{Remark}
\newenvironment{breakablealgorithm}
{
	\begin{center}
		\refstepcounter{algorithm}
		\hrule height.8pt depth0pt \kern2pt
		\renewcommand{\caption}[2][\relax]{
			{\raggedright\textbf{\ALG@name~\thealgorithm} ##2\par}%
			\ifx\relax##1\relax 
			\addcontentsline{loa}{algorithm}{\protect\numberline{\thealgorithm}##2}%
			\else 
			\addcontentsline{loa}{algorithm}{\protect\numberline{\thealgorithm}##1}%
			\fi
			\kern2pt\hrule\kern2pt
		}
	}{
		\kern2pt\hrule\relax
	\end{center}
}
\begin{document}

\title{\textbf{Two-Timescale Optimization Framework for Sparse-Feedback Linear-Quadratic Optimal Control}}
\author{Lechen Feng, Yuan-Hua Ni,  Xuebo Zhang\\
\textit{\small College of Artificial Intelligence, Nankai University, Tianjin 300350, P.R. China.}\thanks{Email: fenglechen0326@163.com (Lechen Feng), yhni@nankai.edu.cn (Yuan-Hua Ni), zhangxuebo@nankai.edu.cn (Xuebo Zhang).}
}

\date{}

\maketitle

\begin{abstract}
A $\mathcal{H}_2$-guaranteed sparse-feedback linear-quadratic (LQ) optimal control with convex parameterization and convex-bounded uncertainty is studied in this paper,  where $\ell_0$-penalty is added into the $\mathcal{H}_2$ cost to penalize the number of communication links among distributed controllers.
Then, the sparse-feedback gain is investigated to minimize the modified $\mathcal{H}_2$ cost together with the stability guarantee, and the corresponding main results are of three parts.
First, the $\ell_1$ relaxation sparse-feedback LQ problem is of concern, and a two-timescale algorithm is developed based on proximal coordinate descent and primal-dual splitting approach.
Second, piecewise quadratic relaxation sparse-feedback LQ control is investigated, which exhibits an accelerated convergence rate.
Third, sparse-feedback LQ problem with $\ell_0$-penalty is directly studied through BSUM (Block Successive Upper-bound Minimization) framework, and precise approximation method and variational properties are introduced.

\textbf{Keywords:} sparse feedback; linear-quadratic optimal control; two-timescale algorithm; $\ell_0$-penalty.
\end{abstract}

\section{Introduction}
During the last few decades, the concept of distributed control has gained significant attention due to its appealing characteristics, such as the potential to lower communication costs and parallelizable processing.
In particular, distributed linear-quadratic (LQ) optimal control has been also a topic of general interest, which has been extensively investigated \cite{kk3,kk4}.
Nevertheless, distributed LQ problems present a significant challenge, as obtaining a closed-form solution is exceedingly difficult, with the exception of highly specific cases \cite{Tsitsiklis-1985}.
Optimization,  an ``end-to-end'' method that allows us to optimize the performance criterion directly, has significantly contributed to the centralized LQ problems \cite{i1} and distributed LQ problems \cite{i9,i10,i11,i12}, respectively.
However, the question of how to design a controller with a general sparsity structure has yet to be resolved.
{Here, the term ``sparsity structure'' in this context refers to the presence of zero elements within the  feedback gain matrix.}
{In this paper, the $\mathcal{H}_2$-guaranteed cost sparse-feedback control problem under convex parameterization and convex-bounded uncertainty is studied, which is intrinsically linked to distributed control problem.}
Motivated by recent achievements in optimization-based controls, the objective of this study is to establish an optimization framework for sparse-feedback LQ control, allowing us to obtain a sparse-feedback gain which admits the advantages of high reliability and fault tolerance, reduced communication load and great scalability.

\subsection{Related Works}

\noindent
\textbf{1) Optimization-based Distributed LQ control.}
Sparse-feedback LQ control exhibits a strong connection with the distributed LQ problem, which has been widely considered in the existing literature.
A classic distributed LQ optimal control  can be cast as a centralized problem where the
stabilizing feedback gain matrix is restricted to lie in a particular subspace $\mathcal{K}$. If $\mathcal{K}$
 denotes the block-diagonal matrix space, classic convex parameterization methods can be utilized to obtain an
  approximate convex problem \cite{i4}, and ADMM (Alternating Direction Method of Multipliers) based method is
   shown to converge to the global optimizer of the approximate convex problem \cite{i9,i12,b6}.
    If $\mathcal{K}$ possesses general sparsity structure, the SDP (Semidefinite Programming) relaxation
     perspective is of concern. If the SDP relaxation has a rank-$1$ solution, a globally linear optimal
      distributed controller can be recovered from the solution of SDP \cite{i10}.
      {
      Moreover, an exact and inexact convex reformulation of sparse linear controller synthesis is introduced based on Youla parameterization and
      quadratically invariant assumption \cite{furieri2020sparsity,rotkowitz2005characterization}, and infinite-dimensional convex optimization problems are proposed, which can be solved approximately by Laurent expansion.
      }
      Additionally, zero-order policy optimization algorithm is utilized to optimize the distributed LQ cost directly, which can be used to find a stationary point with a PAC (Probably Approximately Correct) guarantee \cite{i11}.
\\[0.5em]
\textbf{2) Sparse optimization.} Sparse optimization, a task that is intrinsically NP-hard, is a fundamental challenge in the fields of optimization, machine learning, image processing, and control theory.
One popular method is the relaxed approximation method, for example, convex relaxed method \cite{i15}, nonconvex relaxed method \cite{i18} and DC (difference of convex functions) relaxed method \cite{i20}.
Due to an enhanced comprehension of variational characteristics associated with the $\ell_0$-norm, numerous studies have addressed sparse optimization by directly optimizing the $\ell_0$-norm \cite{i23}. Additionally, successive coordinatewise convex problem can approximate $\ell_0$-norm penalty problem \cite{ref4}.
\\[0.5em]
\textbf{3) Separable constrained optimization. }The most classic method to solve separable constrained problem is the ADMM method \cite{i24}, which is based on the  primal-dual framework.
Additionally, there exists a wide range of variations of ADMM; see \cite{i26} for details.
Currently, some dynamical system based methods are proposed to solve linear equality constrained optimization problem, for instance, first-order dynamical system \cite{i30}, second-order dynamical system \cite{i31}. In particular, \cite{ref2} introduces a new primal-dual splitting algorithms with provable nonergodic convergence rates based on differential equation solver approach.

\subsection{Contributions}
This paper examines sparse-feedback LQ optimal control.
If compared with existing results, the main contributions of the presented paper are as follows.

{\textbf{1)}} %
{
The sparse-feedback LQ problem can be represented by the global minimizer of a nonconvex optimization problem with $\ell_0$-penalty, and the convex relaxation problems are of concern.
Compared with \cite{ref1,i12,i9}, the feedback gain with general sparsity structure is investigated rather than the block-diagonal feedback gain, and instead of the infinite-dimensional optimization that is solved by finite-dimensional approximation method \cite{furieri2020sparsity,rotkowitz2005characterization,matni2016regularization}, the proposed optimization problems
of this paper is finite-dimensional.
Furthermore, in contrast to \cite{i10,kk4,arastoo2016closed,i11}, the algorithms introduced in this paper provide an optimality and stabilizing guarantee rather than heuristic algorithms or stationary-point-guarantee.
}

{\textbf{2)}}  %
{
The results of this paper is of independent interest in optimization field.
A novel algorithm framework is proposed for solving sparse optimization problem with $\ell_0$-penalty, which consists of solving coordinatewise convex function successively and is known as a precise approximation method.
In addition, the convex relaxations of $\ell_0$-norm are considered, and efficient algorithms are proposed for the aforementioned convex relaxation problems.
Although existing algorithmic frameworks are applied to solve the convex relaxation problems, these frameworks involve solving subproblems, requiring additional effort of designing suitable iterative algorithm for subproblems through considering the unique structure of LQ control.
}
Furthermore, we have established a direct relationship between the acceleration of solving sparse-feedback LQ problems and the design of strongly convex relaxation problems. To our best knowledge, this phenomenon has not been reported in the field of sparse optimization.

{\textbf{3)}}  The variational properties of $\ell_0$-penalty is firstly studied for sparse-feedback LQ problem.
Motivated by the convex relaxation framework of \cite{ref4}, we show that the $\ell_1$ convex relaxation problems can be seen as approximations of $\ell_0$-penalty sparse-feedback LQ problem.
Noting that \cite{ref4} studies an unconstrained smooth optimization problem, the considered problem of this paper is constrained and nonsmooth. Moreover, the results in \cite{ref4} rely on the assumption that one can obtain a global minimizer of a nonconvex nonsmooth optimization by iterative shrinkage-thresholding algorithms (ISTA), whereas such a strong assumption is not required and satisfied in this paper.  {Hence, the conclusion and techniques of proof presented in our analysis do not exhibit parallel generalizations of those of \cite{ref4}.}

{\textbf{4)}} The sparse-feedback LQ problem studied in this paper is closely related to the distributed LQ control with fixed communication topology, and hence the proposed algorithms and methodologies also make contributions to the field of distributed LQ controls. In addition, the studied sparse-feedback LQ problem also covers the special setting: we aim to obtain a feedback gain as sparse as possible while maintaining the communication topology constraint; to the best of authors' knowledge, such a special distributed LQ formulation has not been reported yet in the existing literature.

{\textit{Notation}}. Let $\Vert \cdot\Vert$, $\Vert\cdot\Vert_F$, $\Vert\cdot\Vert_0$ and $\Vert \cdot\Vert_{\mathcal{H}_2}$ be the spectral norm, Frobenius norm, $\ell_0$-norm, and $\mathcal{H}_2$-norm of a matrix respectively. $\mathbb{S}^n$ is the set of symmetric matrices of $n\times n$;  $\mathbb{S}^n_{++}$ ($\mathbb{S}^n_+$)  is the set of positive (semi-)definite matrices of $n\times n$; $I_n$ is identity matrix of $n\times n$; $\mathbf{0}_n$ is $(0,\dots,0)^\top\in\mathbb{R}^n$.  $A\succ B(A\succeq B)$ means that the matrix $A-B$ is positive (semi-)definite and $A\prec B(A\preceq B)$ means that the matrix $A-B$ is negative (semi-)definite.
Given $A\in\mathbb{R}^{m\times n}$, $A^\dagger$ denotes the Moore-Penrose generalized inverse of $A$.
{
For $A\in\mathbb{R}^{m\times n}=(a_{ij})$, $\mathrm{vec}(A)=(a_{11},\dots, a_{mn})^\top\in\mathbb{R}^{mn}$.
The operator $\langle A,B\rangle$ denotes the Frobenius inner product, i.e., $\langle A,B\rangle=\mathrm{Tr}(A^\top B)$ for all $A,B\in\mathbb{R}^{m\times n}$, and the notation $\otimes$ denotes the Kronecker product of two matrices.
For $\tau>0$, introduce the proximal operator of $g$
\begin{equation*}
  \mathbf{prox}_{\tau g}^{\mathcal{Y}}(z):=\underset{y\in\mathcal{Y}}{\operatorname*{argmin}}\left\{g(y)+\frac{1}{2\tau}\left\|y-z\right\|^2\right\}.
\end{equation*}
}\noindent
For any $\alpha\in\mathbb{R}$, the $\alpha$-level set of a function $f:\mathbb{R}^n\to[-\infty,\infty]$ is the set
${\rm lev}_{\leq\alpha}f=\{x\in\mathbb{R}^n\colon f(x)\leq\alpha\}.$
We introduce the notation $M\lesssim N$, meaning $M\leq CN$ with some generic bounded constant $C>0$.
{
For any subset $C\subseteq\mathbb{R}^n$, $\delta_{C}(\cdot)$ is the indicator function of $C$.

{
The rest of this paper is organized as follows.
In Section \ref{section2}, the basic formulations and assumptions of sparse-feedback LQ problem are proposed.
In Section \ref{section3}, we will study the $\ell_1$ relaxation problem of sparse-feedback LQ control and design a two-timescale algorithm framework to solve this convex relaxation problem.
In addition, in Section \ref{acc_section}, a piecewise quadratic convex relaxation problem is of concern, which admits an accelerated convergence rate and reveals the novel phenomenon of the link between strongly convex relaxation and acceleration of solving sparse-feedback LQ control.
In Section \ref{section6}, we directly tackle the challenge of $\ell_0$-penalty sparse-feedback LQ problem which is nonconvex, nonsmooth and exhibits a significant difficulty.
Finally, in Section \ref{section_link}, we discuss the relation between the sparse-feedback LQ problem studied in this paper and the distributed LQ problem which has been widely studied in existing papers.
}

\section{Sparse-Feedback LQ Formulation}\label{section2}
Consider a linear time-invariant system
\begin{equation}\label{system}
\begin{aligned}
\dot{x}(t)&=Ax(t)+B_2u(t)+B_1w(t),\\
z(t)&=Cx(t)+Du(t)
\end{aligned}
\end{equation}
with state $x(t)\in\mathbb{R}^n$, input $u(t)\in\mathbb{R}^m$, exogenous disturbance input $w(t)\in\mathbb{R}^l$, controlled output $z(t)\in\mathbb{R}^q$ and matrices $A,B_2,B_1,C,D$ with proper sizes. The infinite-horizon LQ problem is to find a linear static state feedback gain $K$ such that
$u(t)=-Kx(t)$
minimizes the following performance criterion
\begin{equation}\label{J}
J=\int_{0}^{\infty}z(t)^\top z(t){\rm d}t.
\end{equation}
By \cite{b1}, to optimize (\ref{J}), it is equivalent to minimize the $\mathcal{H}_2$-norm of the transfer function
\begin{equation*}
H(s)=(C-DK)(sI_n-A+B_2K)^{-1}B_1
\end{equation*}
from $w$ to $z$, and the performance criterion can be reformulated as
\begin{equation*}
  J(K)=\Vert H(s)\Vert_{\mathcal{H}_2}^2={\rm Tr}\left((C-DK)W_c(C-DK)^\top\right),
\end{equation*}
where $W_c$ is the controllability Gramian associated with the closed-loop system.
In this paper, the stabilizing controller below refers to stabilizing system (\ref{system}) internally; see Section 14 of \cite{Zhou-1996} for details.
We assume the following hypotheses hold, which has also been introduced in \cite{ref1,i11,i14}.
\begin{assumption}\label{ass1}
Assume that $C^\top D=0,D^\top D\succ 0,B_1B_1^\top\succ0, (A,B_2)$ is stabilizable and $(A,C)$ has no unobservable modes on the imaginary axis.
\end{assumption}

\begin{remark}\label{ass_1_explain}
	If $z(t)$ exhibits the following form that is studied in \cite{i9}
\begin{equation*}
  z(t)=\begin{bmatrix}
         Q^{\frac{1}{2}} \\
         0
       \end{bmatrix}x(t)+
       \begin{bmatrix}
         0 \\
         R^{\frac{1}{2}}
       \end{bmatrix}u(t)
\end{equation*}
with $Q,R\succ 0$, then $C^\top C\succ0$, $D^\top D\succ 0$ and $C^\top D=0$ naturally hold.
\end{remark}
{
In this paper, the sparse-feedback LQ problem is of concern, which can be formulated by the following optimization problem
\begin{equation}\label{sparse_feedback_1}
	\begin{aligned}
            \min_{K\in\mathbb{R}^{m\times n}}~~& J(K)+\gamma\Vert K\Vert_0\\
            {\rm s.t.}~~&K\in\mathcal{S}
    \end{aligned}
\end{equation}
with stabilizing feedback gain set $\mathcal{S}$ and constant $\gamma\geq 0$. Remarkably, problem \eqref{sparse_feedback_1} may degenerate to the classic infinite-horizon LQ problem when $\gamma=0$, which has been widely investigated in the existing literature, for example \cite{Zhou-1996}. However, optimization problem \eqref{sparse_feedback_1} admits certain significant difficulties: nonconvex LQ cost $J(K)$, nonconvex stabilizing feedback set $\mathcal{S}$ and discontinuous term $\Vert K\Vert_0$;
and thus, through introducing the augmented variables, a classic parameterization strategy \cite{Zhou-1996}} is utilized to simplify problem \eqref{sparse_feedback_1}.
}
Concretely, we introduce the following notations {with $p=m+n$}
\begin{equation*}
  \begin{aligned}
  &F=\begin{bmatrix}
      A & B_2 \\
      0 & 0
    \end{bmatrix}\in\mathbb{R}^{p\times p},~
    G=\begin{bmatrix}
        0  \\
        I_m
      \end{bmatrix}\in\mathbb{R}^{p\times m},\\
  & Q=\begin{bmatrix}
        B_1B_1^\top & 0 \\
        0 & 0
      \end{bmatrix}\in\mathbb{S}^p,~
      R=\begin{bmatrix}
          C^\top C & 0 \\
          0 & D^\top D
        \end{bmatrix}\in\mathbb{S}^p.
  \end{aligned}
\end{equation*}

\begin{assumption}\label{ass2}
The parameter $F$ is unknown but convex-bounded, i.e., $F$ belongs to a polyhedral domain, which is expressed as a convex combination of the extreme matrices, where $F=\sum_{i=1}^{M}\xi_iF_i,\xi_i\geq 0,\sum_{i=1}^{M}\xi_i=1$, and $F_i=\begin{bmatrix}
                              A_i & B_{2,i} \\
                              0 & 0
                            \end{bmatrix}\in\mathbb{R}^{p\times p}$
denotes the extreme vertex of the uncertain domain.
\end{assumption}

Let block matrix
\begin{equation*}
W=\begin{bmatrix}
W_1 & W_2\\
W_2^\top & W_3
\end{bmatrix}\in\mathbb{S}^p
\end{equation*}
with $W_1\in\mathbb{S}_{++}^n,W_2\in\mathbb{R}^{n\times m},W_3\in\mathbb{S}^m$. Regard $\Theta_i(W)=F_iW+WF_i^\top+Q$ as a block matrix, i.e.,
\begin{equation*}
\Theta_i(W)=\begin{bmatrix}
\Theta_{i,1}(W) & \Theta_{i,2}(W)\\
\Theta_{i,2}^\top(W) & \Theta_{i,3}(W)
\end{bmatrix}\in\mathbb{S}^p.
\end{equation*}

The following theorem introduces a subset of stabilizing controller gains, and finds an upper bound of performance criterion (\ref{J}).
\begin{theorem}\label{thm_para}(\cite{ref1})
  One can define the set
  \begin{align*}
  \mathscr{C}&=\{W\in\mathbb{S}^p:W\succeq0,\Theta_{i,1}(W)\preceq0,\forall i=1,2,\ldots,M\},
  \end{align*}
and let
    $\mathscr{K}=\{K=W_2^\top W_1^{-1}\colon W\in\mathscr{C}\}$.
  Then,
  \begin{enumerate}
    \item $K\in\mathscr{K}$ stabilizes the closed-loop system;
    \item $K\in\mathscr{K}$ gives
    $$\langle R,W\rangle\geq \Vert H_i(s)\Vert_{\mathcal{H}_2}^2,~~i=1,\dots,M,$$
    where $\Vert H_i(s)\Vert_{\mathcal{H}_2}$ represents the $\mathcal{H}_2$-norm with respect to the $i$-th extreme system.
  \end{enumerate}
\end{theorem}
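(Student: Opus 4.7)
The plan is to handle the two items separately, both leveraging the Lyapunov-type inequality $\Theta_{i,1}(W) \preceq 0$ built into the definition of $\mathscr{C}$.

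For item 1, I first observe that $W \succeq 0$ implies $W_1 \succeq 0$, and the presence of $W_1^{-1}$ in $K = W_2^\top W_1^{-1}$ forces $W_1 \succ 0$. Block-expanding $\Theta_i(W) = F_iW + WF_i^\top + Q$ and substituting $W_2^\top = KW_1$ turns the $(1,1)$ block into
\begin{equation*}
\widetilde{A}_i W_1 + W_1 \widetilde{A}_i^\top + B_1 B_1^\top \preceq 0,
\end{equation*}
where $\widetilde{A}_i$ denotes the closed-loop matrix at the $i$-th vertex. Since $W_1 \succ 0$ and $B_1B_1^\top \succ 0$ by Assumption \ref{ass1}, the standard Lyapunov theorem yields that each $\widetilde{A}_i$ is Hurwitz. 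Moreover, $W_1$ is a common Lyapunov matrix across all vertices, so taking any convex combination of the vertex inequalities with weights $\xi_i$ certifies Hurwitz-ness of $\sum_i \xi_i \widetilde{A}_i$; thus $K$ quadratically stabilizes the entire uncertainty polytope of Assumption \ref{ass2}.

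For item 2, let $W_c^i \succeq 0$ denote the closed-loop controllability Gramian at vertex $i$, i.e., the solution of $\widetilde{A}_i W_c^i + W_c^i \widetilde{A}_i^\top + B_1 B_1^\top = 0$. Subtracting this equality from the Lyapunov inequality for $W_1$ gives
\begin{equation*}
\widetilde{A}_i(W_1 - W_c^i) + (W_1 - W_c^i)\widetilde{A}_i^\top \preceq 0,
\end{equation*}
and Hurwitz-ness of $\widetilde{A}_i$ forces $W_1 \succeq W_c^i$. On the other block, the Schur complement applied to $W \succeq 0$ together with $W_1 \succ 0$ yields $W_3 \succeq W_2^\top W_1^{-1} W_2 = K W_1 K^\top \succeq K W_c^i K^\top$. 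Using $C^\top D = 0$ from Assumption \ref{ass1}, the cross terms in ${\rm Tr}((C-DK)W_c^i(C-DK)^\top)$ vanish, so
\begin{equation*}
\|H_i(s)\|_{\mathcal{H}_2}^2 = {\rm Tr}(C^\top C W_c^i) + {\rm Tr}(D^\top D K W_c^i K^\top),
\end{equation*}
and combining the two matrix inequalities with the expansion $\langle R, W\rangle = {\rm Tr}(C^\top C W_1) + {\rm Tr}(D^\top D W_3)$ closes the bound.

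The main obstacle I anticipate is largely bookkeeping: one must carefully justify that $W_1$ is strictly positive definite rather than merely semidefinite, since this is what simultaneously licenses the Lyapunov conversion (Hurwitz from a Lyapunov inequality with semidefinite residual) and the Schur complement step delivering $W_3 \succeq K W_1 K^\top$. Beyond that, the argument reduces to a direct algebraic manipulation of a coupled Lyapunov equation/inequality pair, with Assumption \ref{ass1} used precisely once to cancel the $CK$ cross terms.
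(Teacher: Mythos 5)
Your proof is correct and follows exactly the standard guaranteed-cost argument underlying the cited source \cite{ref1} (the paper itself states Theorem \ref{thm_para} without reproducing a proof): quadratic stability of the whole polytope via the common Lyapunov matrix $W_1$, Gramian domination $W_1\succeq W_c^i$ from subtracting the vertex Lyapunov equation, the Schur-complement bound $W_3\succeq W_2^\top W_1^{-1}W_2=KW_1K^\top$, and $C^\top D=0$ to kill the cross terms — all steps check out. The only cosmetic wrinkle is the sign convention: with $u=-Kx$ and $K=W_2^\top W_1^{-1}$ the $(1,1)$ block actually certifies the closed loop $A_i+B_{2,i}K$ (equivalently one should read $K=-W_2^\top W_1^{-1}$), but this ambiguity is inherited from the statement itself and affects no step of your argument, since both the Lyapunov and trace computations are invariant under $K\mapsto -K$.
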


\begin{remark}\label{explain_ass12}
When $F$ is known, the classic infinite-horizon LQ problem
is equivalent to the following convex optimization problem \cite{Zhou-1996}
\begin{equation}\label{classical_convex_parameterization}
         \begin{aligned}
           \min_{W\in\mathbb{S}^p}~~&\langle R,W\rangle\\
           {\rm s.t.}~~&W\in\mathscr{C}.
         \end{aligned}
\end{equation}
However, we want to expand the range of applicability in cases where $F$ is unknown but convex-bounded; see Assumption \ref{ass2}. Remarkably, LQ system with unknown but convex-bounded parameters is widely considered in the field of Linear Parameter Varying (LPV) control, which has many applications in the industrial sector; see Section 10.2.3 of \cite{sename2013robust} for details. In this case, by Theorem \ref{thm_para}, the optimization problem \eqref{classical_convex_parameterization} is no longer equivalent to classic LQ problem, since the objective function of \eqref{classical_convex_parameterization} is the upper bound of $J(K)$.
\end{remark}

{
By Theorem \ref{thm_para} and \cite{Zhou-1996}, we turn to study the following optimization problem
\begin{equation}\label{sparse_feedback_2}
         \begin{aligned}
           \min\limits_{W\in\mathbb{S}^p \atop K\in\mathbb{R}^{m\times n}}~~&\langle R,W\rangle+\gamma \Vert K\Vert_0\\
           {\rm s.t.}~~&W\in\mathscr{C},\\
           &K=W_2^\top W_1^{-1}.
         \end{aligned}
\end{equation}
Due to the nonlinear manifold constraint $K=W_2^\top W_1^{-1}$, optimization problem \eqref{sparse_feedback_2} exhibits substantial challenges. For any given binary matrix $X=(X_{ij})\in\{0,1\}^{m\times n}$ and denoting
\begin{equation*}
\begin{aligned}
  \mathbf{Sparse}(X)&=\{Y=(Y_{ij})\in\mathbb{R}^{m\times n}\colon \text{ if }X_{ij}=0\\
   &\text{ then } Y_{ij}=0, i=1,\dots,m;~j=1,\dots,n\},
\end{aligned}
\end{equation*}
it holds that
\begin{equation*}
  \begin{aligned}
&W_2^\top \in \mathbf{Sparse}(X) \text{ and } W_1 \in \mathbf{Sparse}(I_n) \\
&\quad\quad\quad\quad\quad\quad\quad\quad\Downarrow \\
&W_2^\top W_1^{-1} \in \mathbf{Sparse}(X).
\end{aligned}
\end{equation*}
Hence, it suffices to show that feedback gain $K=W_2^\top W_1^{-1}$ shares the similar sparsity structure with $W_2^\top$ provided that $W_1\in \mathbf{Sparse}(I_n)$; thus minimizing $\Vert K\Vert_0$ can be relaxed to minimize $\Vert W_2^\top\Vert_0$ with additional linear constraint that $W_1\in \mathbf{Sparse}(I_n)$. By default, the relaxation of optimization problem \eqref{sparse_feedback_2} is established as follows
\begin{equation}\label{sparse_feedback_3}
         \begin{aligned}
           \min\limits_{W\in\mathbb{S}^p}~~&\langle R,W\rangle+\gamma \Vert W_2^\top\Vert_0\\
           {\rm s.t.}~~&W\in\mathscr{C},\\
           &W_1\in \mathbf{Sparse}(I_n).
         \end{aligned}
\end{equation}
}
{\noindent
Letting $N=n(n-1)/2$, $e_i=(\mathbf{0}_{i-1}^\top,1,\mathbf{0}_{p-i}^\top)^\top\in\mathbb{R}^p$,
    \begin{equation*}
           \begin{aligned}
           &E_i^j=\begin{bmatrix}
                   e_i^\top \\
                   \vdots \\
                   e_i^\top
                 \end{bmatrix}\in\mathbb{R}^{j\times p},~~
           \hat{E}_i^j=\begin{bmatrix}
                          e_i \\
                          \vdots \\
                          e_j
                        \end{bmatrix}\in\mathbb{R}^{p(j-i+1)},\\
           &\begin{bmatrix}
                     V_{11} \\
                     \vdots \\
                     V_{N1}
                   \end{bmatrix}=\begin{bmatrix}
                    E_{1}^{n-1} \\
                    E_2^{n-2} \\
                    \vdots \\
                    E_{n-1}^1
                  \end{bmatrix}\in\mathbb{R}^{N\times p},~~
           \begin{bmatrix}
                    V_{12} \\
                    \vdots \\
                    V_{N2}
                  \end{bmatrix}=\begin{bmatrix}
                    \hat{E}_2^n \\
                    \hat{E}_3^n \\
                    \vdots \\
                    \hat{E}_n^n
                  \end{bmatrix}\in\mathbb{R}^{Np},
           \end{aligned}
 \end{equation*}
}\noindent
then problem \eqref{sparse_feedback_3} can be equivalently converted to
\begin{equation}\label{sparse_feedback_4}
\begin{aligned}
\min_{W,P}~~&\langle R,W\rangle+\gamma \Vert P\Vert_0\\
{\rm s.t.}~~&W\in\mathbb{S}^p_+,\\
&\Psi_i\in\mathbb{S}_+^{n},\forall i=1,\dots,M,\\
&-V_{j1}WV_{j2}=0,\forall j=1,\dots,N,\\
&V_1WV_2^\top-P=0
\end{aligned}
\end{equation}
with $\gamma>0$, $V_1=[0,I_m],V_2=[I_n,0]$ and
$$\Psi_i=-V_2(F_iW+WF_i^\top+Q)V_2^\top,\forall i=1,\dots,M.$$

\begin{remark}\label{group_sparse}
The group-sparse-feedback LQ problem can be formulated from the optimization viewpoint likewise.
Specifically, by replacing $\Vert P\Vert_0$ of problem \eqref{sparse_feedback_4} by $\Vert P\Vert_{\rm gs}$ (see \cite{beck2019optimization} for the definition of group-$\ell_0$ norm of $P$) and modifying the constraint such that $W_1$ is a block-diagonal matrix with proper size, a group-sparse-feedback $K=W_2^\top W_1^{-1}$ could be obtained.
Remarkably, all the results of this paper could be generalized into the group-sparse setting, and for the sake of simplicity, we only focus on how to obtain a sparse (instead of group sparse) feedback gain in the following sections.
\end{remark}

\section{$\ell_1$ Relaxation Sparse-Feedback LQ Problem}\label{section3}

The $\ell_1$-norm of a matrix $P$ exhibits the following form
\begin{equation*}
  g(P)=\sum_{i,j} w_{ij}\vert P_{ij}\vert
\end{equation*}
with $w_{ij}>0$ for all $i,j$, which may serve as a convex relaxation of $\Vert P\Vert_0$.
In this section, the $\ell_1$ convex relaxation of problem \eqref{sparse_feedback_4} is of concern.
By replacing $\Vert P\Vert_0$ by $g(P)$ above, $\ell_1$ convex relaxation sparse-feedback LQ problem \eqref{sparse_feedback_4} can be expressed in the form
\begin{equation}\label{opt2}
  \begin{aligned}
  \min_{W,P}~~&\langle R,W\rangle+\gamma g(P)\\
  {\rm s.t.}~~&\mathcal{G}(W)\in\mathcal{K},\\
  &V_1WV_2^\top-P=0
  \end{aligned}
\end{equation}
with
\begin{equation*}
  \begin{aligned}
\mathcal{G}(W)&=(W,\Psi_1,\dots,\Psi_M,V_{11}WV_{12},\dots,V_{N1}WV_{N2}),\\
\mathcal{K}&=\mathbb{S}_+^p\times\mathbb{S}_+^n\times\cdots\times\mathbb{S}_+^n\times 0\times\cdots\times 0.
  \end{aligned}
\end{equation*}

\begin{assumption}\label{slater}
  Problem (\ref{opt2}) is strictly feasible, i.e., Slater's condition is satisfied and strong duality holds.
\end{assumption}

\begin{remark}
This assumption is popular in solving distributed LQ problems via optimization landscape \cite{ref1,i12,i9}. Though the question when Assumption \ref{slater} is valid remains unresolved, and \cite{i9} offers some insights from the standpoint of block-diagonal strategy when $(A,B_2)$ is known. Specifically, let $D$ be the feasible set of optimization problem (\ref{opt2}), and denote
$$\widetilde{D}=\{W_2\in\mathbb{R}^{n\times m}\colon W_2=\mathrm{blocdiag}\{W_{2,1},\dots,W_{2,p}\}\}$$
with $W_{2,j}\in\mathbb{R}^{n_j\times m_j}$ for all $j=1,\dots, p$ and $\sum_{j=1}^p m_j=m$, $\sum_{j=1}^p n_j=n$. It is demonstrated that, if system (\ref{system}) is fully actuated or weakly coupled in terms of topological connections or weakly coupled in terms of dynamical interactions (see Definition 4, Definition 5 and Definition 8 of \cite{i9}), then $D\cap\widetilde{D}$ (a subset of $D$) is nonempty (see Proposition 1-3 of \cite{i9}). Hence, if system (\ref{system}) is fully actuated or weakly coupled in terms of topological connections or weakly coupled in terms of dynamical interactions, problem (\ref{opt2}) is feasible. Combining with the fact that $B_1B_1^\top\succ 0$ and $W_3$ is a free variable to be chosen  such that $W\succ 0$ (utilizing Schur complement lemma), problem (\ref{opt2}) is strictly feasible, i.e., Assumption \ref{slater} holds.
It is important to note that the feasible set of (\ref{opt2}) is larger than the ones in \cite{ref1,i12,i14,i9}, since there is no structural constraint on $W_2$ in this paper.
\end{remark}

Letting $\widetilde{W}={\rm vec}(W),\widetilde{P}={\rm vec}(P)$ and denoting $\mathcal{A},\mathcal{B},\widehat{\mathcal{A}}$ with
\begin{equation*}
  \mathcal{A}=\begin{pmatrix}
  V_{12}^\top\otimes V_{11}\\
  \vdots\\
  V_{M2}^\top\otimes V_{M1}\\
  V_2\otimes V_1
  \end{pmatrix}\doteq \begin{pmatrix}
    \widehat{\mathcal{A}}\\
    V_2\otimes V_1
  \end{pmatrix},~~
  \mathcal{B}=\begin{pmatrix}
  0\\
  \vdots\\
  0\\
  -I_{mn}
  \end{pmatrix},
\end{equation*}
problem \eqref{opt2} can be rewritten in a more compact form
\begin{equation}\label{problem2}
\begin{aligned}
\min_{\widetilde{W},\widetilde{P}}~~ &f(\widetilde{W})+h(\widetilde{P})\\
{\rm s.t.}~~&\mathcal{A}\widetilde{W}+\mathcal{B}\widetilde{P}=0,
\end{aligned}
\end{equation}
where
\begin{align*}
f_1(\widetilde{W})&=\langle {\rm vec}(R),\widetilde{W}\rangle,\\
f_2(\widetilde{W})&=\delta_{\Gamma_+^p}(\widetilde{W})+\delta_{\Gamma_+^n}(\widetilde{\Psi}_1)+\dots+\delta_{\Gamma_+^n}(\widetilde{\Psi}_M),\\
f(\widetilde{W})&=f_1(\widetilde{W})+f_2(\widetilde{W}),\\
h(\widetilde{P})&=\gamma g(\widetilde{P})
\end{align*}
with $\widetilde{\Psi}_i={\rm vec}(\Psi_i)$ and
 $ \Gamma_+^p=\{ {\rm vec}(W)\colon W\in\mathbb{S}_+^p \}$.
In this section, we first briefly review the basic setup of primal-dual splitting method and BSUM framework, while then a two-timescale optimization framework is proposed to solve optimization problem \eqref{problem2}. Finally, the convergence analysis and variational properties are introduced.

\subsection{Review of Optimization Background}

The primal-dual splitting \cite{ref2} is a first-order method which solves a separable convex optimization problem of the form:
\begin{equation}\label{sep_opt}
  \min_{x\in\mathcal{X},y\in\mathcal{Y}} F(x,y)=\bar{f}(x)+\bar{g}(y)~~{\rm s.t.}~Ax+By=b,
\end{equation}
where $\mathcal{X},\mathcal{Y}$ are two closed convex sets, and $\bar{f}, \bar{g}$ are two proper closed convex functions. Assume that $\bar{g}$ is closed and $\mu_g$-strongly convex with $\mu_g>0$ and $\bar{f}=f_1+f_2$ with closed convex $f_2$,  $\mu_f$-strongly convex and $L_f$-smooth $f_1$ and $0\leq\mu_f\leq L_f<\infty$. Then, the following primal-dual splitting method is proposed to solve problem \eqref{sep_opt}
\begin{equation}\label{Luo}
\left\{
\begin{aligned}
&u_k=(x_k+\alpha_kv_k)/(1+\alpha_k),\\
&d_k=\nabla f_1(u_k)+A^\top\lambda_k,\\
&v_{k+1}=\underset{v\in\mathcal{X}}{\operatorname*{argmin}}\{f_{2}(v)+\langle d_{k},v\rangle\\
&+\frac{\alpha_{k}}{2\theta_{k}}\left\|Av+Bw_{k}-b\right\|^{2}+\frac{\widetilde{\eta}_{f,k}}{2\alpha_{k}}\|v-\widetilde{v}_{k}\|^{2}\},\\
&x_{k+1}=(x_k+\alpha_kv_{k+1})/(1+\alpha_k),\\
&\bar{\lambda}_{k+1}=\lambda_k+\alpha_k/\theta_k(Av_{k+1}+Bw_k-b),\\
&y_{k+1}=\mathbf{prox}_{\tau_{k}\bar{g}}^{\mathcal{Y}}(\widetilde{y}_{k}-\tau_{k}B^{\top}\bar{\lambda}_{k+1}),\\
&w_{k+1}=y_{k+1}+(y_{k+1}-y_{k})/\alpha_k,\\
&\lambda_{k+1}=\lambda_k+\alpha_k/\theta_k(Av_{k+1}+Bw_{k+1}-b),
\end{aligned}
\right.
\end{equation}
where $(\tau_k,\tilde{v}_k,\tilde{y}_k,\tilde{\eta}_{f,k},\eta_{g,k})$ is given by
\begin{align}
\label{tau}&\tau_{k}=\alpha_{k}^{2}/\eta_{g,k},\\
\label{f2} &\eta_{g,k}=(\alpha_k+1)\beta_k+\mu_g\alpha_k,\\
\label{f3} &\widetilde{y}_k=y_k+\frac{\alpha_k\beta_k}{\eta_{g,k}}(w_k-y_k),\\
\label{f4} &\widetilde{\eta}_{f,k}=\gamma_k+\mu_f\alpha_k,\\
\label{f5} &\widetilde{v}_k=\frac{1}{\widetilde{\eta}_{f,k}}(\gamma_kv_k+\mu_f\alpha_k u_k),
\end{align}
and $\{(u_k,d_k,v_k,x_k,\bar{\lambda}_k,y_k,w_k,\lambda_k)\}$ is the iteration sequence.
To emphasis, $\alpha_k>0$ is the step size and the parameter system is given by
\begin{equation}\label{para}
\begin{aligned}
  &\frac{\theta_{k+1}-\theta_k}{\alpha_k}=-\theta_{k+1},\\
  &\frac{\gamma_{k+1}-\gamma_k}{\alpha_k}=\mu_f-\gamma_{k+1},\\
  &\frac{\beta_{k+1}-\beta_k}{\alpha_k}=\mu_g-\beta_{k+1}
\end{aligned}
\end{equation}
with initial conditions $\theta_0=1,\gamma_0>0$ and $\beta_0>0$.

\begin{remark}\label{Luopaper-1}
In fact, the following differential inclusion
\begin{equation}\label{ode}
  \left\{
  \begin{aligned}
  &0\in \gamma x''+(\gamma+\mu_f)x'+\partial_x \mathcal{L}(x,y,\lambda),\\
  &0=\theta\lambda'-\nabla_\lambda \mathcal{L}(x+x',y+y',\lambda),\\
  &0\in\beta y''+(\beta+\mu_g)y'+\partial_y \mathcal{L}(x,y,\lambda)
  \end{aligned}
  \right.
\end{equation}
is the continuous-time version of algorithm (\ref{Luo}), where
\begin{equation}\label{Lagrangian2}
  \mathcal{L}(x,y,\lambda)=\bar{f}(x)+\bar{g}(y)+\left\langle \lambda,Ax+By-b\right\rangle.
\end{equation}
Obviously, differential inclusion (\ref{ode}) encompasses two Nesterov-type differential inclusions pertaining to primal variables $x$ and $y$, as well as a gradient flow about the dual variable $\lambda$. Besides, an alternative presentation of (\ref{ode}) reads as
\begin{equation}\label{ode2}
  \left\{
  \begin{aligned}
  &x'=v-x,\\
  &\gamma v'\in\mu_f(x-v)-\partial_x\mathcal{L}(x,y,\lambda),\\
  &\theta\lambda'=\nabla_\lambda\mathcal{L}(v,w,\lambda),\\
  &\beta w'\in\mu_g(x-v)-\partial_y\mathcal{L}(x,y,\lambda),\\
  &y'=w-y.
  \end{aligned}
  \right.
\end{equation}
Therefore, (\ref{Luo}) represents the semi-implicit Euler discretization of differential inclusion (\ref{ode2}), which justifies its classification as a differential equation solver approach.
 We employ this specific primal-dual splitting method instead of the classic ADMM primarily because it allows us to establish a connection between the acceleration of solving sparse-feedback LQ problems and the design of strongly convex relaxation problems. This relationship will be further discussed in the subsequent sections.
\end{remark}

In general, separable optimization problems exhibit favorable properties. However, in practical scenarios, the majority of optimization problems are non-separable, with optimization variables being intricately coupled. Concretely, the following non-separable (possibly nonconvex) problem is considered in \cite{ref9}:
\begin{equation}\label{separable_nonconvex}
  \min~f(x),~~{\rm s.t.~}x\in\mathcal{X},
\end{equation}
where $x=(x_1,\dots,x_n)^\top\in\mathbb{R}^m$, and $\mathcal{X}\subseteq\mathbb{R}^m$ is the Cartesian product of $n$ closed convex sets: $\mathcal{X}=\mathcal{X}_1\times\dots\times\mathcal{X}_n$ with $\mathcal{X}_i\in\mathbb{R}^{m_i}$ and $\sum_i m_i=m$.

\begin{definition}\label{auxiliary}
  For a continuous function $f:\mathcal{X}\to\mathbb{R}$, where $\mathcal{X}\subseteq\mathbb{R}^m$ is the Cartesian product of $n$ closed convex sets: $\mathcal{X}=\mathcal{X}_1\times\dots\times\mathcal{X}_n$ with $\mathcal{X}_i\in\mathbb{R}^{m_i}$ and $\sum_i m_i=m$. We call $\{u_i(x_i,x)\}_{i=1}^n$ are auxiliary functions of $f$, if
  \begin{enumerate}
    \item $u_i(y_i,y)=f(y)~~\forall y\in\mathcal{X},\forall i$,
    \item $u_i(x_i,y)\geq f(y_1,\dots,y_{i-1},x_i,y_{i+1},\dots,y_n)$ $\forall x_i\in\mathcal{X}_i$, $\forall y\in\mathcal{X},\forall i,$
    \item $u_i'(x_i,y;d_i)|_{x_i=y_i}=f'(y;d)$ $\forall d=(0,\dots,d_i,\dots,0)$, s.t. $y_i+d_i\in\mathcal{X}_i$ $\forall i$,
    \item $u_i(x_i,y)$ is continuous in $(x_i,y) $ $\forall i$.
  \end{enumerate}
\end{definition}

The following pseudocode of BSUM framework proposed in \cite{ref9} gives methodology for
solving separable optimization problem \eqref{separable_nonconvex}, where $\{u_i\}_{i=1}^n$ is a sequence of auxiliary functions of $f$.
\begin{breakablealgorithm}
  \caption{BSUM}
  \begin{algorithmic}
  \State \textbf{Given} : Feasible initial point $x^0\in\mathcal{X}$
  \State \textbf{Result}: $x^r$
  \State Set $r=0$
  \For{$k=0,1,2,...$}
      \State Let $r=r+1,i=(r\mod n)+1$
      \State Let $\mathcal{X}^r=\underset{x_i\in\mathcal{X}_i}{\operatorname*{argmin}}~u_i(x_i,x^{r-1})$
      \State Set $x^r_i$ to be an arbitrary element in $\mathcal{X}^r$
      \State Set $x_k^r=x_k^{r-1}~\forall k\neq i$
      \If{Stopping Criterion$==$True}\\
          ~~~~~~~~~~\Return $x^r$
      \EndIf

  \EndFor
  \end{algorithmic}
  \label{alg4}
\end{breakablealgorithm}

Significantly, the introduced subproblems of primal-dual splitting method \eqref{Luo} and BSUM framework (Algorithm \ref{alg4}) imply that such approaches do not fully solve the optimization problems \eqref{sep_opt} and \eqref{separable_nonconvex}. Indeed, solving the subproblem may present greater challenges than solving the original optimization problem itself, and design the suitable auxiliary functions of a general nonconvex problem may entail considerable difficulties, further complicating BSUM process. Hence, in the rest of this paper, we will discuss how to design efficient algorithms for solving the subproblems and how to construct appropriate auxiliary functions, and both of which represent important topics of independent research in the field of optimization.

\subsection{Solving Inner Layer Optimization Problems}

{
In this subsection, we will discuss how to solve the subproblems when utilizing primal-dual splitting to solve problem \eqref{problem2}. Concretely, iterative scheme \eqref{Luo} consists of two subproblems: updating $v_{k+1}$ and $y_{k+1}$. Obviously, the solution $\widetilde{P}_{k+1}$ of subproblem w.r.t. $y_{k+1}$ is given by the following proximal operator when using the symbol system of this paper
}
\begin{equation}\label{prox1}
\mathbf{prox}_{\tau_{k}h}^{\mathbb{R}^{mn\times 1}}\left(\widetilde{P}_{k}+\frac{\alpha_k\beta_k}{\eta_{g,k}}(w_k-\widetilde{P}_k)-\tau_{k}\mathcal{B}^{\top}\bar{\lambda}_{k+1}\right).
\end{equation}
Denote
\begin{equation}\label{Omega}
\Omega^k={\operatorname*{vec}}^{-1}\left(\widetilde{P}_k+\frac{\alpha_k\beta_k}{\eta_{g,k}}(w_k-\widetilde{P}_k)-\tau_k\mathcal{B}^\top\bar{\lambda}_{k+1}\right),
\end{equation}
$\rho_k=\frac{1}{\tau_k}$, $a_k=\frac{\gamma}{\rho_k}w_{ij}$, and let
\begin{equation}\label{P*}
\begin{aligned}
  P_{ij;k+1}^*&\doteq  \underset{P_{ij}\in\mathbb{R}}{\operatorname*{argmin}} \{\gamma w_{ij}\vert  P_{ij}\vert+\frac{\rho_k}{2}(P_{ij}-\Omega^k_{ij})^2\}\\
  &=\left\{
  \begin{aligned}
  &(1-{a_k}/{\vert\Omega_{ij}^k\vert})\Omega_{ij}^k,~~&\vert \Omega_{ij}^k\vert >a_k,\\
  &0,~~&\vert \Omega_{ij}^k\vert \leq a_k.
  \end{aligned}
  \right.
\end{aligned}
\end{equation}
By Example 6.19 of \cite{ref5}, we can deduce the closed-form solution of subproblem \eqref{prox1}:
$$\widetilde{P}_{k+1}=(P_{11;k+1}^*,\dots,P_{mn;k+1}^*)^\top,$$
which exhibits naturally sparsity structure.

Subproblem w.r.t. $v_{k+1}$ is much more complicated, necessitating the implement of suitable iterative algorithm, which is the main task and contribution of this subsection.
Specifically, by the update rule of $v_{k+1}$ in \eqref{Luo} and when employing the symbol system of this paper, the subproblem w.r.t. $v_{k+1}$ can be transformed to
\begin{align}\label{subproblem2}
\min_{{\rm svec}(W)}~& \langle d_{k},D_1{\rm svec}(W) \rangle+\frac{\alpha_{k}}{2\theta_{k}}\left\|\mathcal{A}D_1{\rm svec}(W)+\mathcal{B}w_{k}\right\|^{2}\notag\\
&+\frac{\widetilde{\eta}_{f,k}}{2\alpha_{k}}\left\|D_1{\rm svec}(W)-\widetilde{v}_{k}\right\|^{2}\notag\\
{\rm s.t.}~& {\rm svec}(W)\in\widetilde{\Gamma}_+^p,\notag\\
&{\rm svec}(\Psi_i)\in\widetilde{\Gamma}_+^n,~i=1,\dots,M,
\end{align}
where $\widetilde{\Gamma}_+^p=\{ {\rm svec}(W)\colon W\in\mathbb{S}_+^p\}$, $\mathrm{svec}(\cdot)$ is the vectorization operator on $\mathbb{S}^n$ defined by
\begin{equation*}
  {\rm svec}(S)=\left[s_{11},s_{12},\cdots,s_{n1},\cdots,s_{nn}\right]^\top\in\mathbb{R}^{\frac{n(n+1)}{2}}
\end{equation*}
for $S=(s_{ij})\in\mathbb{S}^n$, and there exist matrices $T_1,T_2,D_1,D_2$ such that
\begin{equation*}
  \begin{aligned}
  &{\rm svec}(W)=T_1{\rm vec}(W),~~{\rm svec}(\Psi_i)=T_2{\rm vec}(\Psi_i),\\
  &{\rm vec}(W)=D_1{\rm svec}(W),~~{\rm vec}(\Psi_i)=D_2{\rm svec}(\Psi_i).
  \end{aligned}
\end{equation*}

\begin{remark}
We partition the identity matrix $I_N$ ($N=n(n-1)/2$) as follows
\begin{equation*}
  I_N=(u_{11},u_{21},\dots,u_{n1},u_{22},\dots,u_{n2},\dots,u_{nn}).
\end{equation*}
Then, for every matrix $A\in\mathbb{S}^{n}$, the elimination matrix $L$ (which performs the transformation ${\rm svec}(A)=L{\rm vec}(A)$) can be explicitly expressed as
\begin{equation*}
  L=\sum_{i\geq j}u_{ij}({\rm vec}E_{ij})^\top=\sum_{i\geq j} (u_{ij}\otimes e_j^\top\otimes e_i^\top)
\end{equation*}
with $E_{ij}=e_ie_j^\top$. By Lemma 3.2 of \cite{b3}, $L^\top=L^\dagger$, and naturally $LL^\top=I_N$.
Inversely, there exist matrix $D_1\in\mathbb{R}^{p^2\times\frac{p(p+1)}{2}}$ and $D_2\in\mathbb{R}^{n^2\times N}$ such that
$${\rm vec}(W)=D_1{\rm svec}(W),~~{\rm vec}(\Psi_i)=D_2{\rm svec}(\Psi_i).$$
In addition, $D_1$ can be explicitly expressed as
\begin{equation*}
D_1=\sum_{i\geq j}u_{ij}{\rm vec}(T_{ij})^\top,
\end{equation*}
where $T_{ij}$ is a $p\times p$ matrix with $1$ in position $(i,j),(j,i)$ and zero elsewhere.
\end{remark}

{By Assumption 3, for all $k$, problem (\ref{subproblem2}) satisfies Slater's condition and strong duality holds. }
The primal problem of (\ref{subproblem2}) presents a level of difficulty that justifies the exploration of its dual formulation.
\begin{lemma}
The dual problem of optimization problem \eqref{problem2} exhibits an explicit form of quadratic optimization problem with semi-definite cone constraints.
\end{lemma}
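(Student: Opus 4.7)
The plan is to attach semidefinite multipliers to each conic constraint in (\ref{subproblem2}), form the Lagrangian (which is quadratic and strictly convex in $\mathrm{svec}(W)$), eliminate the primal variable in closed form, and collect the result as a quadratic function of the dual variables restricted to a product of semidefinite cones. Slater's condition is already assumed, so strong duality and the equivalence of the derived problem with the true dual are automatic.

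First I would introduce dual variables $Z\in\mathbb{S}_+^p$ for the constraint $W\succeq 0$ and $Y_i\in\mathbb{S}_+^n$ for each $\Psi_i = -V_2(F_iW+WF_i^\top+Q)V_2^\top\succeq 0$, $i=1,\dots,M$. Using the cyclic and symmetrization identities
\begin{equation*}
\mathrm{Tr}(Y_i V_2 F_i W V_2^\top) = \langle V_2^\top Y_i V_2 F_i, W\rangle,
\end{equation*}
one rewrites $\langle Y_i,\Psi_i\rangle = -\langle M_i(Y_i),W\rangle - \langle Y_i,V_2 Q V_2^\top\rangle$ with $M_i(Y_i):=F_i^\top V_2^\top Y_i V_2 + V_2^\top Y_i V_2 F_i$. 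Setting $w:=\mathrm{svec}(W)$, the Lagrangian of (\ref{subproblem2}) becomes
\begin{equation*}
\mathcal{L}_k(w,Z,Y)=\tfrac{1}{2}w^\top H_k w + b_k(Z,Y)^\top w + c_k(Y),
\end{equation*}
where
\begin{equation*}
H_k := D_1^\top\!\Bigl(\tfrac{\alpha_k}{\theta_k}\mathcal{A}^\top\mathcal{A}+\tfrac{\widetilde{\eta}_{f,k}}{\alpha_k}I\Bigr)D_1,
\end{equation*}
and $b_k(Z,Y)$ gathers the linear contributions from $d_k$, $\widetilde{v}_k$, $w_k$, the multiplier $Z$ (through $\langle Z,W\rangle$), and the operators $M_i(Y_i)$, while $c_k(Y)$ collects the constant terms $-\sum_i\langle Y_i,V_2 Q V_2^\top\rangle$ together with the constants from the squared norms.

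Since $\widetilde{\eta}_{f,k}/\alpha_k>0$ and the duplication matrix $D_1$ has full column rank, $H_k\succ 0$ on $\mathrm{svec}(\mathbb{S}^p)$, so the inner minimization admits the unique closed-form minimizer $w^*(Z,Y)=-H_k^{-1}b_k(Z,Y)$, which is affine in $(Z,Y)$. Substituting back yields the dual objective
\begin{equation*}
g_k(Z,Y)=-\tfrac{1}{2}b_k(Z,Y)^\top H_k^{-1} b_k(Z,Y)+c_k(Y),
\end{equation*}
a quadratic function of $(Z,Y)$. Consequently, the dual of (\ref{subproblem2}) is
\begin{equation*}
\max_{Z,Y}\; g_k(Z,Y)\quad\text{s.t.}\quad Z\in\mathbb{S}_+^p,\; Y_i\in\mathbb{S}_+^n,\; i=1,\dots,M,
\end{equation*}
which is exactly a quadratic program with constraints given by a Cartesian product of semidefinite cones, as claimed.

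The main obstacle is not conceptual but notational: writing $b_k(Z,Y)$ and $c_k(Y)$ explicitly via $D_1$, $\mathcal{A}$, $\mathcal{B}$, and the operators $M_i$ requires careful bookkeeping of the svec/vec duplication relationships so that the Lagrangian is genuinely expressed as a quadratic form in $w$, and so that $H_k\succ 0$ can be read off from the strictly positive coefficient $\widetilde{\eta}_{f,k}/\alpha_k$. Once this bookkeeping is in place, the quadratic structure of the dual and the invertibility of $H_k$ fall out immediately, and strong duality (under Assumption~\ref{slater}) guarantees no duality gap, completing the argument.
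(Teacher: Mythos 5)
Your proposal is correct and follows essentially the same route as the paper: attach semidefinite-cone multipliers to the constraints of (\ref{subproblem2}), note the Lagrangian is a strictly convex quadratic in $\mathrm{svec}(W)$ thanks to the proximal coefficient $\widetilde{\eta}_{f,k}/\alpha_k>0$, eliminate the primal variable in closed form (affine in the multipliers), and substitute back to obtain a quadratic dual objective over a product of semidefinite cones. The only differences are notational (matrix-valued multipliers and the adjoint operators $M_i$ versus the paper's vectorized multipliers with Kronecker/$T_2$ bookkeeping), and you additionally make explicit the positive-definiteness justification that the paper leaves implicit when writing $M^{-1}$.
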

\begin{proof}
Denoting
\begin{equation*}
\begin{aligned}
&\tilde{d}_k=D_1^\top d_k-X_0\\
&~~~~~~+D_1^\top\sum_{i=1}^{M}[(V\otimes VF_i)^\top+(VF_i\otimes V)^\top]T_2^\top X_i\\
&~~~\doteq\mathcal{L}_k(X_0,X_1,\dots,X_M),\\
&\sigma_1=\frac{\alpha_k}{2\theta_k},~\sigma_2=\frac{\widetilde{\eta}_{f,k}}{2\alpha_k},~\tilde{b}_k=\mathcal{B}w_k,
\end{aligned}
\end{equation*}
the Lagrangian of (\ref{subproblem2}) can be defined as
\begin{equation}\label{Lagrangian}
\begin{aligned}
&\mathcal{L}({\rm svec}(W);X)\\
=&\langle \tilde{d}_k,{\rm svec}(W)\rangle+\sigma_1\|\mathcal{A}D_1{\rm svec}(W)+\tilde{b}_k\|^{2}\\
&+\sigma_2\|D_1{\rm svec}(W)-\widetilde{v}_{k}\|^{2}\\
&+\langle X_1+\dots+X_M,T_2(V\otimes V){\rm vec}(Q)\rangle,
\end{aligned}
\end{equation}
where $X=(X_0,X_1,\dots,X_M)\in\widetilde{\Gamma}_+^p\times\widetilde{\Gamma}_+^n\times\dots\times\widetilde{\Gamma}_+^n$ is the Lagrange multiplier.
Hence, by means of tedious yet essential calculations, the Lagrange dual function $\theta(X)$ is derived by
\begin{equation}\label{dualfunction}
\begin{aligned}
  \theta(X)&=\min_{{\rm svec}(W)}\mathcal{L}({\rm svec}(W);X)\\
  &=\langle X_1+\dots+X_M,T_2(V\otimes V){\rm vec}(Q)\rangle\\
  &~~~+\min_{{\rm svec}(W)}\hat{\theta}({\rm svec}(W))
\end{aligned}
\end{equation}
with
\begin{equation*}
\begin{aligned}
\hat{\theta}({\rm svec}(W))=&\langle \tilde{d}_k,{\rm svec}(W)\rangle+\sigma_1\|\mathcal{A}D_1{\rm svec}(W)+\tilde{b}_k\|^{2}\\
&+\sigma_2\|D_1{\rm svec}(W)-\widetilde{v}_{k}\|^{2}.
\end{aligned}
\end{equation*}
Letting $\nabla\hat{\theta}({\rm svec}(W))=0$, and denoting
\begin{equation*}
\begin{aligned}
&M=2\sigma_1(\mathcal{A}D_1)^\top(\mathcal{A}D_1)+2\sigma_2D_1^\top D_1,\\
&q_k=-2\sigma_1(\mathcal{A}D_1)^\top\tilde{b}_k+2\sigma_2D_1^\top\widetilde{v}_k,
\end{aligned}
\end{equation*}
we can obtain
\begin{equation}\label{relation}
{\rm svec}(W)=M^{-1}(q_k-\tilde{d}_k).
\end{equation}
Substituting (\ref{relation}) into (\ref{dualfunction}), it holds
\begin{align*}
  \theta(X)&=\langle X_1+\dots+X_M,T_2(V\otimes V){\rm vec}(Q)\rangle\\
  &~~~+\tilde{d}_k^\top(\sigma_1(\mathcal{A}D_1 M^{-1})^\top(\mathcal{A}D_1 M^{-1})+\sigma_2M^{-2}\\
  &~~~-M^{-1})\tilde{d}_k+\langle \tilde{d}_k,M^{-1}q_k+2\sigma_1(\mathcal{A}D_1 M^{-1})^\top\xi_k^1\\
  &~~~+2\sigma_2(D_1 M^{-1})^\top\xi_k^2\rangle+\Vert\xi_k^1\Vert^2+\Vert\xi_k^2\Vert^2,
\end{align*}
where $\xi_k^1=-\mathcal{A}D_1 M^{-1}q_k-\tilde{b}_k$, $\xi_k^2=\widetilde{v}_k-D_1 M^{-1}q_k$.
Denoting
\begin{equation*}
\begin{aligned}
\Omega_k&=M^{-1}q_k+2\sigma_1(\mathcal{A}D_1 M^{-1})^\top\xi_k^1+2\sigma_2(D_1 M^{-1})^\top\xi_k^2,\\
\Psi&=\sigma_1(\mathcal{A}D_1 M^{-1})^\top(\mathcal{A}D_1 M^{-1})+\sigma_2M^{-2}-M^{-1},
\end{aligned}
\end{equation*}
the formulation of the dual problem is presented in such a manner:
  \begin{align}\label{dualproblem1}
  \max_{X=(X_0,\dots,X_M)}~~&\langle X_1+\dots+X_M,T_2(V\otimes V){\rm vec}(Q)\rangle\notag\\
  &+\langle \tilde{d}_k,\Omega_k\rangle+\tilde{d}_k^\top\Psi\tilde{d}_k\notag\\
  {\rm s.t.}~~&X_0\in\widetilde{\Gamma}_+^p,\notag\\
  &X_i\in\widetilde{\Gamma}_+^n,~i=1,\dots,M.
  \end{align}
By the definition of $\tilde{d}_k$ which is linear mapping w.r.t. $(X_0,X_1,\dots,X_M)$, dual problem \eqref{dualproblem1} is quadratic optimization problem with semi-definite cone constraints. We finish our proof here.
\end{proof}
In the following text of this subsection, the sGS (symmetric Gauss-Seidel) proximal coordinate descent (PCD) is presented to solve the dual problem (\ref{dualproblem1}). Define
\begin{equation*}
\begin{aligned}
&~~~~\Theta(X_0,X_1,\dots,X_M)=\\
&-\langle X_1+\dots+X_M,T_2(V\otimes V){\rm vec}(Q)\rangle\\
&-\langle\mathcal{L}_k(X_0,\dots,X_M),\Omega_k\rangle\\
&+\Vert \sqrt{-\Psi}\mathcal{L}_k(X_0,\dots,X_M)\Vert^2\\
&+\delta_{\widetilde{\Gamma}_+^p}(X_0)+\delta_{\widetilde{\Gamma}_+^n}(X_1)+\dots+\delta_{\widetilde{\Gamma}_+^n}(X_M),
\end{aligned}
\end{equation*}
and the dual problem (\ref{dualproblem1}) can be equivalently expressed as
\begin{equation}\label{dualproblem2}
  \min_{X=(X_0\dots,X_M)}~\Theta(X_0,X_1,\dots,X_M).
\end{equation}

\begin{lemma}
When solving problem \eqref{dualproblem2} by sGS-PCD, every subproblem admits a closed-form solution, i.e., there exists a sequence of positive linear operator $\{\mathcal{S}_i\}_{i=0}^{M}$ such that the following problem
\begin{equation*}
  \min_{X_i}\Theta(X_0^\star,\dots,X_{i-1}^\star,X_i,{X}_{i+1}^\star,\dots,{X}_M^\star)+\frac{1}{2}\left\Vert X_i-X_i^\star\right\Vert_{\mathcal{S}_i}^2
\end{equation*}
with given $(X_0^\star,X_1^\star,\dots,X_M^\star)$ admits a closed-form minimizer for $\forall i=0,\dots, M$.
\end{lemma}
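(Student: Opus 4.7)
The plan is to show that, for each block index $i$, the proximal-regularized block subproblem
$$\min_{X_i}\;\Theta(X_0^\star,\dots,X_{i-1}^\star,X_i,X_{i+1}^\star,\dots,X_M^\star)+\tfrac{1}{2}\|X_i-X_i^\star\|_{\mathcal{S}_i}^2$$
reduces, with a suitably chosen positive linear operator $\mathcal{S}_i$, to a Euclidean projection onto the vectorized semidefinite cone $\widetilde{\Gamma}_+^{\cdot}$, which is closed-form via spectral decomposition. The construction of $\mathcal{S}_i$ is the classical ``majorization'' trick: pick $\mathcal{S}_i$ so that the Hessian of the smooth quadratic part plus $\mathcal{S}_i$ collapses to a multiple of the identity.

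First I would carry out the block-coordinate expansion. Because $\mathcal{L}_k(\cdot)$ is affine in $(X_0,\dots,X_M)$, isolating the $i$-th block while freezing the others yields an expression of the form
\begin{equation*}
\Theta_i(X_i)\;=\;\tfrac12\langle X_i,\mathcal{H}_i X_i\rangle+\langle c_i^{\,\star},X_i\rangle+\mathrm{const}+\delta_{\widetilde{\Gamma}_+^{\cdot}}(X_i),
\end{equation*}
where $c_i^{\,\star}$ is an affine function of the frozen blocks (assembled from the linear term $-\langle X_1+\cdots+X_M,T_2(V\otimes V)\mathrm{vec}(Q)\rangle$, from $-\langle\mathcal{L}_k,\Omega_k\rangle$, and from cross terms in $\|\sqrt{-\Psi}\mathcal{L}_k\|^2$), and $\mathcal{H}_i$ is the self-adjoint operator $2\mathcal{M}_i^{\top}(-\Psi)\mathcal{M}_i$ with $\mathcal{M}_i$ denoting the partial linear operator mapping $X_i$ into $\mathcal{L}_k$. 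Since $-\Psi\succeq 0$ (this is forced by convexity of the primal quadratic (\ref{subproblem2}), as $M\succ 0$ guarantees that the partial-dual profile is concave in $X$), we obtain $\mathcal{H}_i\succeq 0$.

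Next I would define, for each $i$, a constant $\lambda_i\geq\|\mathcal{H}_i\|$ and set
\begin{equation*}
\mathcal{S}_i\;:=\;\lambda_i\mathcal{I}-\mathcal{H}_i\;\succeq\;0.
\end{equation*}
A short algebraic manipulation then rewrites the regularized subproblem as
\begin{equation*}
\min_{X_i}\;\tfrac{\lambda_i}{2}\bigl\|X_i-z_i^{\,\star}\bigr\|^2+\delta_{\widetilde{\Gamma}_+^{\cdot}}(X_i),\qquad z_i^{\,\star}=X_i^\star-\tfrac{1}{\lambda_i}\bigl(\mathcal{H}_i X_i^\star+c_i^{\,\star}\bigr),
\end{equation*}
i.e., $X_i^{\,\mathrm{new}}=\Pi_{\widetilde{\Gamma}_+^{\cdot}}(z_i^{\,\star})$, the Euclidean projection onto the vectorized PSD cone. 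This projection is closed-form: reshape $z_i^{\,\star}$ into a symmetric matrix $Z_i^{\,\star}$, diagonalize $Z_i^{\,\star}=U_i\mathrm{diag}(\mu_{i,1},\dots)U_i^\top$, and return $\mathrm{svec}(U_i\mathrm{diag}(\mu_{i,j}^{+})U_i^{\top})$, where $(\cdot)^{+}=\max(\cdot,0)$. Iterating this construction over $i=0,1,\dots,M$ produces the family $\{\mathcal{S}_i\}_{i=0}^{M}$ required by the statement, and incorporating the sGS correction step only adds a symmetric forward–backward sweep that does not affect the closed-form character of each inner block update.

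The main obstacle will be rigorously verifying $-\Psi\succeq 0$ from the explicit formula
$\Psi=\sigma_1(\mathcal{A}D_1 M^{-1})^{\top}(\mathcal{A}D_1 M^{-1})+\sigma_2 M^{-2}-M^{-1}$; the cleanest way is not to manipulate these terms entry-wise but to appeal to the identity $\theta(X)=\min_{\mathrm{svec}(W)}\mathcal{L}(\mathrm{svec}(W);X)$ from (\ref{dualfunction}), together with $M\succ 0$, which forces the quadratic-in-$X$ part of $\theta(X)$ to be concave and hence $\Psi\preceq 0$. A secondary technical point is the explicit computation of $\lambda_i$ so that $\mathcal{S}_i\succeq 0$ with a moderate constant; this can be done either by taking $\lambda_i$ as a spectral bound of the finite-dimensional matrix representation of $\mathcal{H}_i$, or more sharply by choosing $\mathcal{S}_i$ blockwise to cancel only the off-diagonal couplings inside the $X_i$ block, which keeps the proximal regularization as tight as possible and preserves the standard sGS convergence guarantees.
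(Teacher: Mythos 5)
Your proposal is correct and follows essentially the same route as the paper: the paper also chooses $\mathcal{S}_i=\rho_i\mathcal{I}-L_i$ with $\rho_i=\max\{\mathrm{eig}(L_i)\}$ (your $\lambda_i\mathcal{I}-\mathcal{H}_i$), so that the blockwise quadratic collapses to a multiple of the identity and each update becomes the closed-form projection $\Pi_{\widetilde{\Gamma}_+^n}$ (resp.\ $\Pi_{\widetilde{\Gamma}_+^p}$) computed by eigenvalue decomposition, in both the backward and forward sGS sweeps. The point you flag about $-\Psi\succeq 0$ is handled exactly as you suggest—concavity of the dual function $\theta(X)$ with $M\succ 0$—which the paper assumes implicitly by writing $\Vert\sqrt{-\Psi}\,\mathcal{L}_k\Vert^2$ in $\Theta$.
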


\begin{proof}
For $i=1,\dots,M$, denoting
\begin{equation*}
  \begin{aligned}
  &H_i(X_0,\dots,X_{i-1},X_{i+1},\dots,X_M)=\\
  &T_2(V\otimes V){\rm vec}(Q)+ T_2[(V\otimes VF_i)+(VF_i\otimes V)]D_1\Omega_k\\
  &-2T_2[(V\otimes VF_i)+(VF_i\otimes V)]D_1(-\Psi)\\
  &~~~\mathcal{L}_k(X_0,\dots,X_{i-1},X_{i+1},\dots,X_M),\\
  &L_i=2T_2[(V\otimes VF_i)+(VF_i\otimes V)]D_1(-\Psi)D_1^\top\\
  &~~~~~~[(V\otimes VF_i)+(VF_i\otimes V)]^\top T_2^\top,
  \end{aligned}
  \end{equation*}
we can compute
\begin{equation*}
\begin{aligned}
  &\partial_{X_i}\Theta(X_0,X_1,\dots,X_M)=-H_i(X_0,\dots,X_{i-1},\\
  &X_{i+1},\dots,X_M)+L_iX_i+\partial\delta_{\widetilde{\Gamma}_+^n}(X_i).
\end{aligned}
\end{equation*}
The subproblem in terms of variable $X_i$ in the backward sGS sweep is given by
\begin{align}\label{barX}
\bar{X}_{i}^{k+1}=\underset{X_i}{\operatorname*{argmin}}~&\Theta(X_0^k,X_1^k,\dots,X_{i-1}^k,X_i,\bar{X}_{i+1}^{k+1},\dots,\notag\\
&\bar{X}_M^{k+1})+\frac{1}{2}\Vert X_i-X_i^k\Vert_{\mathcal{S}_i}^2,
\end{align}
where $\mathcal{S}_i$ is a positive linear operator given by
$\mathcal{S}_i(X)=\rho_i\mathcal{I}(X)-L_iX$, with $\rho_i=\max\{{\rm eig}(L_i)\}$
and an identity operator $\mathcal{I}$.
The  optimality condition implies that
\begin{align*}
  0&\in-H_i(X_0^k,X_1^k,\dots,X_{i-1}^k,\bar{X}_{i+1}^{k+1},\dots,\bar{X}_M^{k+1})\\
  &~~~+L_iX_i+\partial\delta_{\widetilde{\Gamma}_+^n}(X_i)+\mathcal{S}_i(X_i-X_i^k)\\
  &=\rho_iX_i+\partial\delta_{\widetilde{\Gamma}_+^n}(X_i)-H_i(X_0^k,X_1^k,\dots,X_{i-1}^k,\\
  &~~~\bar{X}_{i+1}^{k+1},\dots,\bar{X}_M^{k+1})-\rho_iX_i^k+L_iX_i^k.
\end{align*}
Denoting
\begin{equation*}
\begin{aligned}
 \Delta_i^k=&\rho_i^{-1}(H_i(X_0^k,X_1^k,\dots,X_{i-1}^k,\bar{X}_{i+1}^{k+1},\dots,\bar{X}_M^{k+1})\\
 &+\rho_iX_i^k-L_iX_i^k),
\end{aligned}
\end{equation*}
upon careful observation, it may be noted that
\begin{equation}\label{u1}
  \bar{X}_{i}^{k+1}=(I+\rho_i^{-1}\partial\delta_{\widetilde{\Gamma}_+^n})^{-1}(\Delta_i^k)=\Pi_{\widetilde{\Gamma}_+^n}(\Delta_i^k),
\end{equation}
where the second equality is justified by Theorem 2 of \cite{ref1}.
For $i=0$, we can compute
\begin{equation*}
\begin{aligned}
\partial_{X_0}\Theta(X_0,\dots,X_M)=&L_0X_0+\partial\delta_{\widetilde{\Gamma}_+^p}(X_0)\\
&-H_0(X_1,\dots,X_M)
\end{aligned}
\end{equation*}
with $L_0=-2\Psi$,
$$H_0(X_1,\dots,X_M)=-2\Psi\mathcal{L}_k(X_1,\dots,X_M)-\Omega_k.$$
Similarly, the solution $X_{0}^{k+1}$ of subproblem w.r.t.  $X_0$ in the backward sGS sweep is given by
\begin{equation*}
\underset{X_0}{\operatorname*{argmin}}~\Theta(X_0,\bar{X}_1^{k+1},\dots,\bar{X}_M^{k+1})+\frac{1}{2}\left\Vert X_0-X_0^k\right\Vert_{\mathcal{S}_0}^2,
\end{equation*}
where the positive linear operator $\mathcal{S}_0$ is given by
$\mathcal{S}_0(X)=\rho_0\mathcal{I}(X)-L_0X$ with $\rho_0=\max\{{\rm eig}(L_0)\}$.
Then, by the same token, we have
\begin{equation}\label{u2}
X_{0}^{k+1}=(I+\rho_0^{-1}\partial\delta_{\widetilde{\Gamma}_+^p})^{-1}(\Delta_0^k)=\Pi_{\widetilde{\Gamma}_+^p}(\Delta_0^k)
\end{equation}
with
$\Delta_0^k=\rho_0^{-1}(H_0(\bar{X}_1^{k+1},\dots,\bar{X}_M^{k+1})+\rho_0X_0^k-L_0X_0^k).$
For forward sGS sweep, $X_i^{k+1}$ is given by
\begin{equation}\label{u3}
 X_{i}^{k+1}=\Pi_{\widetilde{\Gamma}_+^n}(\tilde{\Delta}_i^k), ~~i=1,\dots,M,
\end{equation}
where $\tilde{\Delta}_i^k$ is defined as
\begin{equation*}
\begin{aligned}
  \tilde{\Delta}_i^k=&\alpha_i^{-1}(H_i(X_0^{k+1},X_1^{k+1},\dots,X_{i-1}^{k+1},\bar{X}_{i+1}^{k+1},\dots,\bar{X}_M^{k+1})\\
  &+\rho_i\bar{X}_i^{k+1}-L_i\bar{X}_i^{k+1}).
\end{aligned}
\end{equation*}
Together with \eqref{u1}, \eqref{u2}, \eqref{u3} and the fact that projection $\Pi_{\mathbb{S}_+^n}(\cdot)$ can be explicitly expressed by the eigenvalue decomposition (see Section 8.1.1 of \cite{boyd2004convex}), we finish the proof here.
\end{proof}

\begin{remark}\label{explain_bsum}
  Due to the proximal term $\Vert X_i-X_i^k\Vert_{\mathcal{S}_i}^2$, optimization problem \eqref{barX} admits a closed-form solution \eqref{u1}.
  In fact, if the proximal term is removed, i.e., the update of $\bar{X}_i^{k+1}$ is given by
  \begin{equation}\label{why_proximal}
    \bar{X}_{i}^{k+1}=\underset{X_i}{\operatorname*{argmin}}~\Theta(X_0^k,\dots,X_i,\dots,\bar{X}_M^{k+1}),
  \end{equation}
  then sGS-PCD still converges to the global minimizer of \eqref{dualproblem2}.
  However, optimization problem \eqref{why_proximal} has no closed-form solution, necessitating the development of an iterative algorithm for solving problem \eqref{why_proximal}, e.g., gradient descent (GD) or Nesterov GD.
  It is worth highlighting that the extra iterative algorithm for problem \eqref{why_proximal} will significantly improve the complexity of Algorithm \ref{alg1}, and
  hence  the introduced special proximal term $\Vert X_i-X_i^k\Vert_{\mathcal{S}_i}^2$ is necessary.
\end{remark}

Introducing the proximal term for solving separable optimization problem is first proposed in \cite{b4}, and the convergence analysis can be considered by the analogous methodology of \cite{b5}, which originally studies the proximal terms in variational inequalities.
Remarkably, references \cite{b4,b5} only show the fact that introducing the proximal terms has no influence on the algorithm convergence, while how to design the suitable proximal terms remains unstudied, necessitating a case-by-case analysis of individual optimization problem.
Hence, the proximal term $\Vert X_i-X_i^k\Vert_{\mathcal{S}_i}^2$ in this paper does not exhibit parallel generalizations of those of \cite{b4,b5}, but requires additional efforts with consideration of the special structure of dual function $\Theta$; see Remark \ref{explain_bsum} for details.

\begin{definition}\label{strict_aux}
Consider the following optimization problem
\begin{equation*}
\begin{aligned}
  \min~~ &f(x)=g(x_1,\dots,x_K)+\sum_{i=1}^{K} h_i(x_i)\\
  {\rm s.t.}~~&x_i\in\mathcal{X}_i,~i=1,\dots,K,
\end{aligned}
\end{equation*}
where $g$is a smooth convex function and  $h_i$ is a closed convex function. We call that $\{u_i\}$ is a sequence of strict auxiliary functions, if
\begin{enumerate}
  \item $u_i(y_i,y)=g(y)~~\forall y\in\mathcal{X}=\mathcal{X}_1\times\dots\times\mathcal{X}_K,\forall i$,
  \item $u_i(x_i,y)\geq g(y_1,\dots,y_{i-1},x_i,y_{i+1},\dots,y_n)~~\forall x_i\in\mathcal{X}_i,\forall y\in\mathcal{X},\forall i,$
  \item $\nabla u_i(y_i,y)=\nabla_i g(y),\forall y\in\mathcal{X},\forall i$,
  \item $u_i(x_i,y)$ is continuous in $(x_i,y), \forall i$. Further, for any given $y\in\mathcal{X}$, it is proper, closed and strongly convex function of $x_i$, satisfying
      \begin{equation*}
        \begin{aligned}
        u_i(x_i,y)\geq &u_i(\hat{x}_i,y)+\left\langle \nabla u_i(\hat{x}_i,y),x_i-\hat{x}_i\right\rangle\\
        &+\frac{\gamma_i}{2}\Vert x_i-\hat{x}_i\Vert^2,\forall x_i,\hat{x}_i\in\mathcal{X}_i,
        \end{aligned}
      \end{equation*}
      where $\gamma_i>0$ is independent of the choice of $y$.
  \item For any given $y\in\mathcal{X}$, $u_k(x_i,y)$ has Lipschitz continuous gradient, i.e., $\forall \hat{x}_i,x_i\in\mathcal{X}_i,\forall i,$
  $$\Vert\nabla u_i(x_i,y)-\nabla u_i(\hat{x}_i,y)\Vert\leq L_i\Vert x_k-\hat{x}_i\Vert,$$
  where $L_i>0$ is some constant. Further, we have $\forall i,\forall y,z\in\mathcal{X}$
  $$\Vert\nabla u_i(x_i,y)-\nabla u_i(x_i,z)\Vert\leq G_i\Vert y-z\Vert,~\forall x_i\in\mathcal{X}_i.$$
  Define $L_{{\rm max}}=\max_i L_i,G_{{\rm max}}=\max_i G_i$.
\end{enumerate}
\end{definition}

\begin{proposition}\label{thm_innerconverge}
Let $\{X^k\}=\{(X_0^k,\dots,X_M^k)\}$ be the sequence generated by \eqref{u2}, \eqref{u3} for solving optimization problem \eqref{dualproblem2}. Then, it holds
\begin{equation*}
  {\Theta}(X^k)-{\Theta}^*\leq\frac{c_1}{\sigma_1}\frac{1}{k},~\forall k\geq 1,
\end{equation*}
where $\gamma=\frac{1}{2}\min_k\gamma_k$,
\begin{equation*}
  \begin{aligned}
  &R=\max\limits_{X\in\mathcal{X}}\max\limits_{X^*\in\mathcal{X}^*}\left\{\Vert X-X^*\Vert\colon \widehat{\Theta}(X)\leq\widehat{\Theta}(X^0)\right\},\\
  &c_1=\max\{4\sigma_1-2,\widehat{\Theta}(X^0)-\widehat{\Theta}^*,2\},\\
  &\sigma_1=\frac{\gamma}{(M+1)G_{\rm max}^2R^2}.
  \end{aligned}
\end{equation*}
\end{proposition}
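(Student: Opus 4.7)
The plan is to recognize that the sGS sweep defined by \eqref{u2}--\eqref{u3} is exactly an instance of the BSUM Algorithm~\ref{alg4} applied to \eqref{dualproblem2}, with auxiliary functions
\begin{equation*}
u_i(X_i,X) \;=\; g(X_0,\dots,X_{i-1},X_i,X_{i+1},\dots,X_M) + \tfrac{1}{2}\bigl\|X_i - X_i^{\rm old}\bigr\|_{\mathcal{S}_i}^{2},
\end{equation*}
where $g$ is the smooth quadratic part of $\Theta$ obtained by stripping off the indicator functions, and $\mathcal{S}_i = \rho_i\mathcal{I} - L_i$ with $\rho_i=\max\{\mathrm{eig}(L_i)\}$. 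Having matched the algorithm to BSUM with a strict auxiliary family in the sense of Definition~\ref{strict_aux}, the proposition will follow from the classical sublinear convergence rate for BSUM in the convex setting.

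First, I would verify that $\{u_i\}$ satisfies conditions 1--5 of Definition~\ref{strict_aux}. Conditions 1--3 are immediate since the proximal term and its gradient vanish at $X_i = X_i^{\rm old}$; condition 2 (majorization) uses $\mathcal{S}_i\succeq 0$, which is precisely why $\rho_i$ is taken to be the largest eigenvalue of $L_i$. For condition 4, the $X_i$-Hessian of $u_i$ equals $L_i+\mathcal{S}_i = \rho_i\mathcal{I}$, so each $u_i(\cdot,X)$ is strongly convex with constant $\gamma_i=\rho_i$ independent of the base point $X$; adding the closed convex indicator $\delta_{\widetilde\Gamma^n_+}$ preserves this uniform strong convexity on the effective domain. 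For condition 5, since $H_i$ and $\mathcal{L}_k$ are affine in the block variables, $\nabla u_i$ is Lipschitz in $X_i$ with constant $\rho_i$ and jointly Lipschitz in $X$ with a constant $G_i$ governed by the operator norms of $\Psi$ and of the Kronecker-product matrices $T_2[(V\otimes VF_i)+(VF_i\otimes V)]D_1$. Taking maxima gives $L_{\max}$ and $G_{\max}$.

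Next, I would apply the two-step BSUM argument. The first step is a per-sweep sufficient-decrease estimate: by strong convexity of $u_i$ together with the majorization inequality, the inner update for block $i$ yields
\begin{equation*}
\Theta(X^{k,i}) \;\le\; \Theta(X^{k,i-1}) - \tfrac{\gamma_i}{2}\bigl\|X_i^{k,i}-X_i^{k,i-1}\bigr\|^{2},
\end{equation*}
and summing over the full sGS sweep produces $\Theta(X^{k+1}) \leq \Theta(X^{k}) - \gamma \|X^{k+1}-X^k\|^{2}$ with $\gamma = \tfrac{1}{2}\min_i\gamma_i$. The second step is a gap bound: combining convexity of $\Theta$ with the joint Lipschitz property of $\nabla u_i$, subgradient optimality for each block subproblem, and boundedness of the iterates via $R$, one obtains $\Theta(X^{k+1})-\Theta^{\ast} \;\lesssim\; (M{+}1)\,G_{\max} R\,\|X^{k+1}-X^{k}\|$. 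Substituting the gap bound into the descent inequality yields a scalar recurrence $\Delta_k-\Delta_{k+1} \gtrsim \sigma_1 \Delta_{k+1}^{2}$ for $\Delta_k=\Theta(X^k)-\Theta^{\ast}$, with $\sigma_1$ of the stated form $\gamma/((M{+}1)G_{\max}^{2}R^{2})$; a standard telescoping/induction argument then delivers $\Delta_k \leq c_1/(\sigma_1 k)$, and the three-term maximum in $c_1$ absorbs the initial gap and the small-$k$ boundary terms of the induction.

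The main obstacle will be justifying that condition 4 of Definition~\ref{strict_aux} holds with a constant $\gamma_i$ independent of the current iterate $X$: although the Hessian of the quadratic majorant is $\rho_i\mathcal{I}$, one must confirm that the interaction with the indicator $\delta_{\widetilde\Gamma^n_+}$ does not degrade strong convexity when the active face of the semi-definite cone changes from iteration to iteration, and that the $\rho_i$ produced by $\max\{\mathrm{eig}(L_i)\}$ is bounded below uniformly along the trajectory. A secondary difficulty is guaranteeing $R<\infty$, which requires either a coercivity argument for the level sets of $\Theta$ on the feasible set of \eqref{dualproblem2} or an a priori bound on $\{X^k\}$; this should be reconciled with Assumption~\ref{slater}, since strict feasibility of the primal problem \eqref{opt2} translates into boundedness of the dual level sets via strong duality.
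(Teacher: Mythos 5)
Your proposal takes essentially the same route as the paper: you recast \eqref{dualproblem2} in the standard block form, identify the sGS-PCD updates \eqref{u2}--\eqref{u3} as BSUM (Algorithm \ref{alg4}) with the proximal auxiliary functions $u_i(X_i,X^k)=g(X_0^k,\dots,X_i,\dots,X_M^k)+\tfrac{1}{2}\Vert X_i-X_i^k\Vert_{\mathcal{S}_i}^2$ forming a strict auxiliary family per Definition \ref{strict_aux}, and then invoke the sublinear BSUM rate, which the paper obtains by directly citing Theorem 1 of \cite{b6} rather than re-deriving the descent-plus-gap recurrence you sketch. This is correct and matches the paper's argument.
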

\begin{proof}
Denote
\begin{equation*}
\begin{aligned}
&g(X_0,\dots,X_M)=\Vert \sqrt{-\Psi}\mathcal{L}_k(X_0,\dots,X_M)\Vert^2,\\
&h_0(X_0)=\langle X_0,\Omega_k\rangle,\\
&h_i(X_i)=-\langle X_i,T_2(V^\top\otimes V){\rm vec}(Q)\\
&~~~~~~~~~~~+T_2[(V\otimes VF_i)+(VF_i\otimes V)]T_1^\top\Omega_k\rangle,
\end{aligned}
\end{equation*}
for $i=1,\dots,M$. Then, optimization problem (\ref{dualproblem2}) is equivalent to the following standard form:
\begin{equation*}
\begin{aligned}
\min\limits_{X_0,\dots,X_M} &\widehat{\Theta}(X_0,\dots,X_M)=g(X_0,\dots,X_M)+\sum_{i=0}^{M}h_i(X_i)\\
{\rm s.t.}~~~&X_0\times X_1\dots\times X_M\in\widetilde{\Gamma}_+^p\times\widetilde{\Gamma}_+^n\times\dots\times\widetilde{\Gamma}_+^n.
\end{aligned}
\end{equation*}
For $i=0,\dots,M$, let
\begin{equation*}
u_i(X_i,X^k)=g(X_0^k,\dots,X_i,\dots,X_M^k)+\frac{1}{2}\Vert X_i-X_i^k\Vert_{\mathcal{S}_i}^2,
\end{equation*}
with $X^k=(X_0^k,\dots,X_M^k)$. Then, by the update rule \eqref{u1}, \eqref{u2}, \eqref{u3}, sGS-PCD can be regarded as a special case of BSUM, while $\{u_i\}_{i=0}^M$ is a sequence of  strict auxiliary functions of $\widehat{\Theta}$; see Definition \ref{strict_aux}. Hence, this proposition is a direct corollary of Theorem 1 of \cite{b6}.
\end{proof}

For $i=1,\dots, M$, the relative residual error can be defined as
\begin{equation}\label{u4}
\begin{aligned}
{\rm err}_{X_0}^k&=\frac{\Vert X_0^k-\Pi_{\widetilde{\Gamma}_+^p}(X_0^k-L_0X_0^k+H_0)\Vert}{1+\Vert X_0^k\Vert+\Vert L_0X_0^k-H_0 \Vert},\\
{\rm err}_{X_i}^k&=\frac{\Vert X_i^k-\Pi_{\widetilde{\Gamma}_+^n}(X_i^k-L_iX_i^k+H_i)\Vert}{1+\Vert X_i^k\Vert+\Vert L_iX_i^k-H_i \Vert}.
\end{aligned}
\end{equation}
Let
\begin{equation}\label{u5}
{\rm err}^k=\max\{{\rm err}_{X_0}^k,{\rm err}_{X_1}^k,\dots,{\rm err}_{X_M}^k\},
\end{equation}
and the inner layer optimization process will terminate if ${\rm err}^k<\epsilon$. By (\ref{relation}),
\begin{equation}\label{u6}
{\rm svec}(W)=M^{-1}(q_k-\mathcal{L}_k(X_0^k,X_1^k,\dots,X_M^k))
\end{equation}
is the optimal solution to (\ref{subproblem2}). Based on the aforementioned discussion in this section, the iterative algorithm sGS-PCD is summarized as Algorithm \ref{alg1}.
\begin{breakablealgorithm}
    \caption{sGS-PCD}
    \begin{algorithmic}
    \State \textbf{Given} : Initial point $(X_0^0,X_1^0,\dots,X_M^0)$, stopping criterion parameter $\epsilon$
    \State \textbf{Result} : ${\rm vec}(W)$
    \For{$k=0,1,2,...$}

        \State Update $\bar{X}_M^{k+1},\dots,\bar{X}_1^{k+1},X_0^{k+1}$ by (\ref{u1}), (\ref{u2})
        \State Update $X_1^{k+1},\dots,X_M^{k+1}$ by (\ref{u3})
        \State Compute ${\rm err}_{X_0}^{k+1},{\rm err}_{X_1}^{k+1},\dots,{\rm err}_{X_M}^{k+1}$ by (\ref{u4})
        \State Determine ${\rm err}^{k+1}$ by (\ref{u5})
        \If{${\rm err}^{k+1}<\epsilon$}
           \State Compute ${\rm svec}(W)$ by (\ref{u6})\\
            ~~~~~~~~~~\Return ${\rm vec}(W)=D_1{\rm svec}(W)$
        \EndIf
    \EndFor
    \end{algorithmic}
    \label{alg1}
\end{breakablealgorithm}

\subsection{Convergence Analysis}
Based on the previous discussion in this section, the following pseudocode demonstrates how to solve $\ell_1$ relaxation sparse-feedback LQ problem \eqref{problem2} through the primal-dual splitting method \eqref{Luo}. In this subsection, we will perform a convergence analysis and introduce the variational analysis of problem \eqref{problem2}, especially discussing the asymptotic behaviour as $\gamma\to 0$.
The two-timescale algorithm framework for $\ell_1$ relaxation sparse-feedback LQ control is organized as follows.

\begin{breakablealgorithm}
    \caption{$\ell_1$ Relaxation Sparse-Feedback LQ}
    \begin{algorithmic}
    \State \textbf{Given} : Initial point $\widetilde{W}_0\in\Gamma^p,v_0\in\Gamma^p,\widetilde{P}_0\in\mathbb{R}^{mn},w_0\in\mathbb{R}^{mn},\lambda_0\in\mathbb{R}^{mn},\theta_0=1,\beta_0>0,\gamma_0>0$, smoothness parameter $L_f$
    \State \textbf{Result} : $\widetilde{W}_k$, $\widetilde{P}_k$
    \For{$k=0,1,2,...$}
        \State Compute $\alpha_k$ by (\ref{f1})
        \State Compute $\eta_{g,k},\widetilde{y}_k,\widetilde{\eta}_{f,k},\widetilde{v}_k$ by (\ref{f2}), (\ref{f3}), (\ref{f4}), (\ref{f5})
        \State Determine $u_k=(1+\alpha_k)^{-1}(\widetilde{W}_k+\alpha_kv_k)$,\\
        ~~~~~~~~~~~~~~~~~\hspace{0.125em}$d_k=\nabla f_1(u_k)+\mathcal{A}^\top\lambda_k$
        \State Update $v_{k+1}\leftarrow$Algorithm \ref{alg1}
        \State Update $\widetilde{W}_{k+1}=(1+\alpha_k)^{-1}(\widetilde{W}_k+\alpha_kv_{k+1})$
        \State Update $\bar{\lambda}_{k+1}=\lambda_k+\alpha_k/\theta_k(\mathcal{A}v_{k+1}+\mathcal{B}w_k)$
        \State Update $\widetilde{P}_{k+1}$ by (\ref{P*})
        \State Update $w_{k+1}=\widetilde{P}_{k+1}+\alpha_k^{-1}(\widetilde{P}_{k+1}-\widetilde{P}_k)$
        \State Update $\lambda_{k+1}=\lambda_k+\alpha_k/\theta_k(\mathcal{A}v_{k+1}+\mathcal{B}w_{k+1})$
        \State Update $\theta_{k+1},\gamma_{k+1},\beta_{k+1}$ by (\ref{para})
        \If{Stopping Criterion$==$True}\\
           ~~~~~~~~~~\Return $\widetilde{W}_{k+1},\widetilde{P}_{k+1}$
        \EndIf

    \EndFor
    \end{algorithmic}
    \label{alg2}
\end{breakablealgorithm}

\begin{remark}\label{stopping_criteria}
  Denoting
  \begin{equation*}
    \begin{aligned}
      &r_{k+1}=\mathcal{A}\widetilde{W}_{k+1}+\mathcal{B}\widetilde{P}_{k+1},\\
      &s_{k+1}=\rho \mathcal{A}^\top\mathcal{B}(\widetilde{P}_{k+1}-\widetilde{P}_k)
    \end{aligned}
  \end{equation*}
  with constant $\rho>0$, it is recommended that the stopping criterion of Algorithm \ref{alg2} can be selected as
  \begin{equation*}
    \Vert r_k\Vert\leq \epsilon^{(k)}_{\rm pri},~\Vert s_k\Vert\leq \epsilon^{(k)}_{\rm dua},
  \end{equation*}
  where
  \begin{equation*}
    \begin{aligned}
      &\epsilon^{(k)}_{\rm pri}=\sqrt{M+mn}\epsilon_1+\epsilon_2\max\{ \Vert \mathcal{A}\widetilde{W}_k\Vert,\Vert\mathcal{B}\widetilde{P}_k\Vert\},\\
      &\epsilon^{(k)}_{\rm dua}=p\epsilon_1+\epsilon_2\Vert \mathcal{A}^\top\lambda_k\Vert
    \end{aligned}
  \end{equation*}
  with some constants $\epsilon_1, \epsilon_2$ \cite{boyd2004convex}. Moreover, the stopping criterion can be selected in many other ways; see \cite{wohlberg2017admm} for some examples.
\end{remark}

The convergence analysis of Algorithm \ref{alg2} can be derived, as stated in the following proposition.
\begin{proposition}\label{thm_l_1}
   Under the initial condition
  $\gamma_0>0$, $\beta_0>0$
  and the condition
  \begin{equation}\label{f1}
    \Vert \mathcal{B}\Vert^{2}\alpha_{k}^{2}=\beta_{k}\theta_{k},
  \end{equation}
  it holds that
  \begin{equation*}
  \begin{aligned}
    &\|\mathcal{A}\widetilde{W}_k+\mathcal{B}\widetilde{P}_k\|\leq\theta_k C_1,\\
    &|F(\widetilde{W}_k,\widetilde{P}_k)-F^*|\leq\theta_k C_2
  \end{aligned}
  \end{equation*}
  with $F(\widetilde{W},\widetilde{P})=f(\widetilde{W})+h(\widetilde{P})$ and constants $C_1 ,C_2$.
  Above, $\theta_k$ satisfies
  \begin{equation*}
  \theta_k\lesssim\frac{\|\mathcal{B}\|}{\sqrt{\beta_0}k},
  \end{equation*}
  provided that $\beta_{0}\leq\left\|\mathcal{B}\right\|^{2}$.
\end{proposition}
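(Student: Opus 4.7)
The plan is to treat Algorithm \ref{alg2} as the primal-dual splitting scheme (\ref{Luo}) applied to the convex problem (\ref{problem2}), and to establish its convergence through a Lyapunov function that discretizes the natural energy of the continuous surrogate (\ref{ode2}). Under Assumption \ref{slater} a saddle point $(\widetilde{W}^{\ast},\widetilde{P}^{\ast},\lambda^{\ast})$ of the Lagrangian (\ref{Lagrangian2}) exists. In the specialization to (\ref{problem2}), $f=f_1+f_2$ is the sum of a linear functional and a convex indicator (convex with $\mu_f=0$), while $h=\gamma g$ is convex with $\mu_g=0$, so the parameter system (\ref{para}) collapses to the common geometric rule $\gamma_{k+1}=\gamma_k/(1+\alpha_k)$, $\beta_{k+1}=\beta_k/(1+\alpha_k)$, $\theta_{k+1}=\theta_k/(1+\alpha_k)$.

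The candidate Lyapunov function is
\begin{equation*}
\mathcal{E}_k=\frac{\gamma_k}{2}\|v_k-\widetilde{W}^{\ast}\|^2+\frac{\beta_k}{2}\|w_k-\widetilde{P}^{\ast}\|^2+\frac{\theta_k}{2}\|\lambda_k-\lambda^{\ast}\|^2+\alpha_k\bigl(F(\widetilde{W}_k,\widetilde{P}_k)-F^{\ast}+\langle\lambda^{\ast},\mathcal{A}\widetilde{W}_k+\mathcal{B}\widetilde{P}_k\rangle\bigr),
\end{equation*}
which mirrors the continuous energy for (\ref{ode2}). The central technical step, and the main obstacle, is the dissipation inequality $\mathcal{E}_{k+1}\le\mathcal{E}_k$. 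The ingredients are: (i) optimality conditions for the two proximal subproblems producing $v_{k+1}$ and $\widetilde{P}_{k+1}$, which, combined with convexity of $f_2$ and $h$, yield three-point inequalities bounding the primal distance decrements; (ii) the dual identity $\mathcal{A}v_{k+1}+\mathcal{B}w_{k+1}=(\theta_k/\alpha_k)(\lambda_{k+1}-\lambda_k)$, which telescopes $\|\lambda_k-\lambda^{\ast}\|^2$; (iii) the averaging recursions $\widetilde{W}_{k+1}=(\widetilde{W}_k+\alpha_k v_{k+1})/(1+\alpha_k)$ and $w_{k+1}=\widetilde{P}_{k+1}+(\widetilde{P}_{k+1}-\widetilde{P}_k)/\alpha_k$, which link the momentum iterates to the averaged ones; and (iv) the scaling rule (\ref{f1}), $\|\mathcal{B}\|^2\alpha_k^2=\beta_k\theta_k$, which absorbs the residual cross term $(\alpha_k/\theta_k)\|\mathcal{B}(\widetilde{P}_{k+1}-\widetilde{P}_k)\|^2$ into the $\beta_k$-weighted decrement. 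The calculation parallels the discrete analysis in \cite{ref2} but must be verified here with $\mu_f=\mu_g=0$.

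Granted monotonicity, both bounds follow quickly. The saddle-point inequality $F(\widetilde{W}_k,\widetilde{P}_k)-F^{\ast}\ge -\|\lambda^{\ast}\|\cdot\|\mathcal{A}\widetilde{W}_k+\mathcal{B}\widetilde{P}_k\|$, paired with the dual-distance estimate $\theta_k\|\lambda_k-\lambda^{\ast}\|^2\le 2\mathcal{E}_0$ drawn from $\mathcal{E}_k\le\mathcal{E}_0$, yields $\|\mathcal{A}\widetilde{W}_k+\mathcal{B}\widetilde{P}_k\|\le\theta_k C_1$, whence $|F(\widetilde{W}_k,\widetilde{P}_k)-F^{\ast}|\le\theta_k C_2$ by back-substitution, the constants $C_1,C_2$ aggregating $\mathcal{E}_0$, $\|\lambda^{\ast}\|$ and the initializations. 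It remains to solve the scalar recursion for $\theta_k$. Since $\theta_k$ and $\beta_k$ obey the same geometric rule, $\theta_k/\beta_k\equiv 1/\beta_0$, so (\ref{f1}) reduces to $\alpha_k=\beta_k/(\sqrt{\beta_0}\,\|\mathcal{B}\|)$; substituting into $\beta_{k+1}=\beta_k/(1+\alpha_k)$ gives $\beta_{k+1}^{-1}=\beta_k^{-1}+(\sqrt{\beta_0}\,\|\mathcal{B}\|)^{-1}$, which integrates to $\beta_k=\beta_0\|\mathcal{B}\|/(\|\mathcal{B}\|+k\sqrt{\beta_0})$ and hence $\theta_k=\|\mathcal{B}\|/(\|\mathcal{B}\|+k\sqrt{\beta_0})\lesssim\|\mathcal{B}\|/(\sqrt{\beta_0}\,k)$; the assumption $\beta_0\le\|\mathcal{B}\|^2$ ensures $\alpha_0=\sqrt{\beta_0}/\|\mathcal{B}\|\le 1$, the standard stability condition needed to close the discrete Lyapunov argument.
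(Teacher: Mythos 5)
Your overall strategy is sound and genuinely more self-contained than the paper's: the paper proves Proposition \ref{thm_l_1} in two lines by observing that $f_1$ is linear (so $L_{f_1}=0$, and $\mu_f=\mu_g=0$) and invoking Theorem 4.1 of \cite{ref2} directly, whereas you set out to reconstruct that theorem's Lyapunov analysis for the scheme \eqref{Luo} specialized to \eqref{problem2}. Your treatment of the parameter system is correct: with $\mu_f=\mu_g=0$ all three recursions in \eqref{para} become $\theta_{k+1}=\theta_k/(1+\alpha_k)$ etc., $\theta_k/\beta_k\equiv 1/\beta_0$, and combining with \eqref{f1} gives $\alpha_k=\beta_k/(\sqrt{\beta_0}\|\mathcal{B}\|)$, $\beta_{k+1}^{-1}=\beta_k^{-1}+(\sqrt{\beta_0}\|\mathcal{B}\|)^{-1}$ and hence $\theta_k=\|\mathcal{B}\|/(\|\mathcal{B}\|+k\sqrt{\beta_0})\lesssim\|\mathcal{B}\|/(\sqrt{\beta_0}k)$, exactly the claimed rate.

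However, there is a genuine gap in the middle of your argument: the specific energy you propose, with the Lagrangian-gap term weighted by $\alpha_k$, cannot deliver the stated rates from mere monotonicity $\mathcal{E}_{k+1}\le\mathcal{E}_k$. Since $\alpha_k=\theta_k\sqrt{\beta_0}/\|\mathcal{B}\|\sim 1/k$, the inequality $\mathcal{E}_k\le\mathcal{E}_0$ only yields $F(\widetilde{W}_k,\widetilde{P}_k)-F^*+\langle\lambda^*,\mathcal{A}\widetilde{W}_k+\mathcal{B}\widetilde{P}_k\rangle\le\mathcal{E}_0/\alpha_k$, a bound that \emph{grows} like $k$ instead of decaying like $\theta_k$; similarly, $\theta_k\|\lambda_k-\lambda^*\|^2\le 2\mathcal{E}_0$ only shows $\|\lambda_k-\lambda^*\|=O(\theta_k^{-1/2})$ and does not, as claimed, "quickly" give $\|\mathcal{A}\widetilde{W}_k+\mathcal{B}\widetilde{P}_k\|\le\theta_kC_1$. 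The analysis in \cite{ref2} avoids this by using an energy whose gap term is \emph{unweighted} (or, equivalently, weighted by $\theta_k^{-1}$) and proving the contraction $\mathcal{E}_{k+1}\le\mathcal{E}_k/(1+\alpha_k)$, so that $\mathcal{E}_k\le\theta_k\mathcal{E}_0$; the feasibility residual then requires an additional argument (a uniform bound on the multipliers obtained by running the estimate over a ball of dual variables, or telescoping the $\lambda$-updates), not just the single saddle-point inequality you cite. So either you must redo the dissipation step in that contraction form—which is precisely the content of Theorem 4.1 of \cite{ref2} that the paper simply cites—or your chain from "monotonicity" to the two $O(\theta_k)$ bounds breaks down on scaling grounds.
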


\begin{proof}
Noticing that $f_1(\widetilde{W})$ is a linear function, $\nabla f_1$ is Lipschitz continuous with Lipschitz parameter $L_{f_1}=0$. Then, this proposition is a direct corollary of Theorem 4.1 of \cite{ref2}.
\end{proof}

The following proposition elucidates the asymptotic behavior of optimization problem \eqref{problem2} when the parameter $\gamma_n\to 0$. Therefore, it is possible to achieve a near-optimal centralized controller by selecting a sufficiently small value for $\gamma$ and utilizing Algorithm \ref{alg2}.

\begin{proposition}\label{gammato0}
Let $\gamma_n\to 0$. Assume that $C^\top C\succ 0$, and denote
\begin{align*}
  F^{\gamma_n}(\widetilde{W})&=f(\widetilde{W})+\gamma_n g((V_2\otimes V_1)\widetilde{W})+\delta_{\widehat{\mathcal{A}}\widetilde{W}=0}(\widetilde{W}) ,\\
  F(\widetilde{W})&=f(\widetilde{W})+\delta_{\widehat{\mathcal{A}}\widetilde{W}=0}(\widetilde{W}).
\end{align*}
Then
$\inf F^{\gamma_n}\to\inf F$ hold.
\end{proposition}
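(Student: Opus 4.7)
The plan is to sandwich $\inf F^{\gamma_n}$ between $\inf F$ and $\inf F + \gamma_n\cdot C$ for a finite constant $C$, then let $\gamma_n\to 0$. The lower inequality is immediate from the pointwise bound $F^{\gamma_n}\geq F$ (since the weighted $\ell_1$ term $\gamma_n g\geq 0$), so $\liminf_n \inf F^{\gamma_n}\geq \inf F$. The real content lies in the matching upper bound.

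For the upper bound, I would first argue that $\inf F$ is attained at some $\widetilde{W}^\star$ via a Weierstrass-type argument. Properness follows from Assumption~\ref{slater}: the strictly feasible point guaranteed by Slater's condition, projected onto the effective domain of $F$ (i.e., with $P$ eliminated via $P=V_1WV_2^\top$), witnesses $\operatorname{dom} F\neq\emptyset$. Lower semicontinuity is clear: $f_1$ is linear, $f_2$ is a finite sum of indicators of closed convex cones ($\Gamma_+^p$ and $\Gamma_+^n$), and $\delta_{\widehat{\mathcal{A}}\widetilde{W}=0}$ is the indicator of a closed affine subspace. The coercivity step is the key point where the added hypothesis $C^\top C\succ 0$ enters. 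Together with $D^\top D\succ 0$ from Assumption~\ref{ass1}, one has $R=\operatorname{diag}(C^\top C,D^\top D)\succ 0$, hence
\begin{equation*}
f_1(\widetilde{W})=\operatorname{tr}(RW)\geq \lambda_{\min}(R)\bigl(\operatorname{tr}(W_1)+\operatorname{tr}(W_3)\bigr)
\end{equation*}
on $\mathbb{S}^p_+$. To upgrade this into coercivity in $\|\widetilde{W}\|_F$ I would control the off-diagonal block by the Cauchy--Schwarz-type bound $\|W_2\|_F\leq\sqrt{\operatorname{tr}(W_1)\operatorname{tr}(W_3)}$ available for any block PSD matrix (factor $W=U^\top U$, write $W_i$ in terms of $U$). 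Thus every sublevel set of $F$ is bounded and closed, hence compact, and a minimizer $\widetilde{W}^\star$ exists.

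With $\widetilde{W}^\star$ in hand, the rest is short. Since $g$ is a weighted $\ell_1$ norm on a finite-dimensional space and $(V_2\otimes V_1)\widetilde{W}^\star=\operatorname{vec}(V_1W^\star V_2^\top)$ lies in $\mathbb{R}^{mn}$, the constant $C:=g((V_2\otimes V_1)\widetilde{W}^\star)$ is finite. Then
\begin{equation*}
\inf F\leq \inf F^{\gamma_n}\leq F^{\gamma_n}(\widetilde{W}^\star)=F(\widetilde{W}^\star)+\gamma_n\, C=\inf F+\gamma_n\,C,
\end{equation*}
and letting $\gamma_n\to 0$ gives $\inf F^{\gamma_n}\to \inf F$.

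The main obstacle is the coercivity argument in the Weierstrass step: the free block $W_2$ is not directly penalized by $f_1$, so one must exploit the PSD structure of $W$ to bound $\|W_2\|_F$ in terms of $\operatorname{tr}(W_1)$ and $\operatorname{tr}(W_3)$. Without the extra hypothesis $C^\top C\succ 0$, $f_1$ would fail to control $W_1$, the sublevel sets could be unbounded, and a minimizing sequence $\{\widetilde{W}^{(k)}\}$ might have $g((V_2\otimes V_1)\widetilde{W}^{(k)})\to\infty$, breaking the sandwich. Everything else is a routine squeeze argument.
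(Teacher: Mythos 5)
Your proof is correct, but it takes a genuinely different route from the paper. The paper invokes epi-convergence machinery from Rockafellar--Wets: it notes $F^{\gamma_n}\stackrel{\rm e}{\longrightarrow}F$, uses the same coercivity estimate $\langle R,W\rangle\geq\lambda_{\min}(R)\,\mathrm{tr}(W)$ (valid because $C^\top C\succ0$ and $D^\top D\succ0$ make $R\succ0$) to get level-boundedness of $F$, upgrades this to eventual level-boundedness of the family $\{F^{\gamma_n}\}$ via Exercise 7.32(c), and then concludes $\inf F^{\gamma_n}\to\inf F$ from Theorem 7.33. You instead run a direct two-sided squeeze: the lower bound $\inf F^{\gamma_n}\geq\inf F$ is immediate from $\gamma_n g\geq0$, and the upper bound comes from evaluating $F^{\gamma_n}$ at a minimizer $\widetilde{W}^\star$ of $F$, whose existence you secure by a Weierstrass argument (properness from Slater, lsc from linearity plus closed indicators, coercivity from $R\succ0$). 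This is more elementary and self-contained, and in fact you could skip the attainment step entirely: an $\epsilon$-minimizer of $F$ suffices, since $g$ is finite everywhere, so only finiteness of $\inf F$ (guaranteed by $F\geq0$ and nonempty domain) is needed. Your control of the off-diagonal block via $\Vert W_2\Vert_F\leq\sqrt{\mathrm{tr}(W_1)\mathrm{tr}(W_3)}$ is valid but can be shortcut by the standard fact that $\Vert W\Vert_F\leq\mathrm{tr}(W)$ for $W\succeq0$, which is essentially the inequality the paper uses. What the paper's heavier machinery buys is more than the stated conclusion: Theorem 7.33 also delivers convergence of (near-)minimizers to $\operatorname*{argmin}F$, a feature the paper exploits in its analogous $\ell_0$-approximation result (Theorem \ref{thm_app}), whereas your squeeze yields only convergence of the optimal values --- which is, however, all that Proposition \ref{gammato0} claims.
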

\begin{proof}
  Obviously, $F^{\gamma_n}(\widetilde{W})\stackrel{\rm e}{\longrightarrow}F(\widetilde{W})$ (the epigraph convergence; see Definition 7.1 of \cite{ref6}). Based on the fact
\begin{equation*}
\begin{aligned}
\langle {\rm vec}(R),\widetilde{W}\rangle&=\langle R,W\rangle\geq \lambda_{{\rm min}}(R){\rm tr}(W)\\
&\geq \lambda_{{\rm min}}(R)\Vert W\Vert_2,
\end{aligned}
\end{equation*}
$F(\widetilde{W})$ is level-bounded. Additionally, all $F^{\gamma_n}$ are closed convex function, and thus all the sets ${\rm lev}_{\leq\alpha}F^{\gamma_n}$ is connected. Then, $\{F^{\gamma_n}\}_{n\in\mathbb{N}}$ is eventually level-bounded by 7.32 (c) of \cite{ref6}. Hence,
$\inf F^{\gamma_n}\to\inf F$ by Theorem 7.33 of \cite{ref6}.
\end{proof}

\section{Piecewise Quadratic Relaxation Sparse LQ and Acceleration}\label{acc_section}

In Section \ref{section3}, we have so far established the two-timescale optimization framework to obtain a sparse-feedback gain of LQ problem with $\mathcal{O}({1}/{k})$ convergence rate. In this section, we will consider to utilize different relaxation method of $\ell_0$-norm, and then demonstrate that such convex relaxation admits an accelerated convergence rate by primal-dual splitting method.

Let $g_2:\mathbb{R}\to\mathbb{R}$ be
\begin{equation*}
g_2(x)=\left\{
\begin{aligned}
&\frac{1}{2}a_1x^2+b_1x,~{\rm if}~x\leq 0,\\
&\frac{1}{2}a_2x^2+b_2x,~{\rm if}~x> 0,
\end{aligned}
\right.
\end{equation*}
and
$g_{\mathcal{Q}}(P)=\sum_{i,j}w_{ij}g_2(p_{ij})$
called a piecewise quadratic function.
Obviously, by choosing $a_1,a_2>0,b_1<0<b_2$, $g_{\mathcal{Q}}(P)$ becomes a strongly convex function with modulus
$\mu_{g_{\mathcal{Q}}}=\min_{i,j}\left\{w_{ij}\min\{a_1,a_2\}\right\}.$
Remarkably, we aim to use $g_{\mathcal{Q}}$ to approximate the $\ell_0$-norm and the piecewise quadratic relaxation sparse-feedback LQ problem exhibits the following form
\begin{equation}\label{acc_sparse_LQ}
\begin{aligned}
\min_{\widetilde{W},\widetilde{P}}~~ &f(\widetilde{W})+\gamma g_{\mathcal{Q}}(\widetilde{P})\\
{\rm s.t.}~~&\mathcal{A}\widetilde{W}+\mathcal{B}\widetilde{P}=0.
\end{aligned}
\end{equation}
Remarkably, by (\ref{Luo}), $\widetilde{P}_{k+1}$ can be obtained by following Theorem \ref{thm17}, while $\widetilde{W}_{k+1}$ can be obtained by Algorithm \ref{alg1} likewise; thus, the two-timescale algorithm framework for piecewise quadratic relaxation sparse-feedback LQ control is summarized as Algorithm \ref{alg3}.

\begin{theorem}\label{thm17}
  Provided that $a_1>0,a_2>0,b_1<0<b_2$ and letting
  \begin{equation*}
  \hat{P}_{ij;k+1}^*=\left\{
  \begin{aligned}
  &0,~{\rm if}~\Omega_{ij}^k<\frac{\gamma w_{ij} b_2}{\rho_k},\Omega_{ij}^k\geq0,\\
  &0,~{\rm if}~\Omega_{ij}^k>\frac{\gamma w_{ij} b_1}{\rho_k},\Omega_{ij}^k<0,\\
  &\frac{\rho_k\Omega_{ij}^k-\gamma w_{ij} b_2}{\gamma w_{ij} a_2+\rho_k},~{\rm if}~ \Omega_{ij}^k \geq\frac{\gamma w_{ij} b_2}{\rho_k},\Omega_{ij}^k\geq0,\\
  &\frac{\rho_k\Omega_{ij}^k-\gamma w_{ij} b_1}{\gamma w_{ij} a_1+\rho_k},~{\rm if}~ \Omega_{ij}^k \leq\frac{\gamma w_{ij} b_1}{\rho_k},\Omega_{ij}^k<0
  \end{aligned}
  \right.
  \end{equation*}
  with $\Omega^k$ given in \eqref{Omega}, it holds
  \begin{equation}\label{acc_g}
    \widetilde{P}_{k+1}=(\hat{P}_{11;k+1}^*,\hat{P}_{21;k+1}^*,\dots, \hat{P}_{mn;k+1}^*)^\top.
  \end{equation}
\end{theorem}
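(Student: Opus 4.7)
The plan is to recognize that, just as in the $\ell_1$ case leading to formula \eqref{P*}, the update rule for $\widetilde{P}_{k+1}$ in the primal–dual splitting \eqref{Luo} is simply the proximal operator of $\tau_k h = \tau_k \gamma g_{\mathcal{Q}}$ applied to the same shifted point whose $\operatorname{vec}^{-1}$ is $\Omega^k$ from \eqref{Omega}. Thus I would start by writing
\begin{equation*}
\widetilde{P}_{k+1}=\operatorname*{argmin}_{P\in\mathbb{R}^{m\times n}}\Bigl\{\gamma g_{\mathcal{Q}}(P)+\tfrac{\rho_k}{2}\|P-\Omega^k\|_F^2\Bigr\},
\end{equation*}
and then exploit the fact that both $g_{\mathcal{Q}}(P)=\sum_{i,j}w_{ij}g_2(P_{ij})$ and the quadratic proximal term decompose entrywise. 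This reduces the whole problem to a collection of independent scalar proximal problems, one for each $(i,j)$:
\begin{equation*}
\hat P_{ij;k+1}^{*}=\operatorname*{argmin}_{p\in\mathbb{R}}\Bigl\{\gamma w_{ij}g_2(p)+\tfrac{\rho_k}{2}(p-\Omega_{ij}^k)^2\Bigr\}.
\end{equation*}

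Next I would solve this scalar problem by splitting into the two branches of $g_2$. On the branch $p>0$, the objective is a smooth strongly convex quadratic (since $\gamma w_{ij}a_2+\rho_k>0$), and its unique unconstrained stationary point is
\begin{equation*}
p_+ = \frac{\rho_k\Omega_{ij}^k-\gamma w_{ij}b_2}{\gamma w_{ij}a_2+\rho_k},
\end{equation*}
which lies in the region $p>0$ exactly when $\Omega_{ij}^k>\gamma w_{ij}b_2/\rho_k$. Analogously, on the branch $p<0$ the stationary point is $p_- = (\rho_k\Omega_{ij}^k-\gamma w_{ij}b_1)/(\gamma w_{ij}a_1+\rho_k)$, which falls into $p<0$ precisely when $\Omega_{ij}^k<\gamma w_{ij}b_1/\rho_k$. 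Since $b_1<0<b_2$, the two activation thresholds satisfy $\gamma w_{ij}b_1/\rho_k<0<\gamma w_{ij}b_2/\rho_k$, so these three regimes are disjoint and cover $\mathbb{R}\setminus[\gamma w_{ij}b_1/\rho_k,\gamma w_{ij}b_2/\rho_k]$.

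What remains is the middle regime $\gamma w_{ij}b_1/\rho_k\le \Omega_{ij}^k\le \gamma w_{ij}b_2/\rho_k$. Here the unconstrained minimum of each branch lies outside its own feasibility half-line, so by strict convexity of each branch the restricted minimum of each branch is attained at the boundary $p=0$. Combined with continuity of $g_2$ at $0$, this forces $\hat P_{ij;k+1}^{*}=0$ in this regime. Assembling the three cases yields exactly the piecewise expression stated in the theorem, and stacking the scalar solutions gives \eqref{acc_g}.

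The only delicate point, and the one I would verify most carefully, is the boundary case where $\Omega_{ij}^k$ coincides with one of the thresholds $\gamma w_{ij}b_\ell/\rho_k$: there both the zero choice and the corresponding $p_\pm=0$ formula agree, so the case partition in the theorem statement (which puts equality into the nontrivial branch for $b_2$ and into the zero branch for $b_1$, say) is consistent. Beyond this bookkeeping, the proof is essentially a one-dimensional proximal-operator calculation carried out branch by branch, with no further analytic difficulty.
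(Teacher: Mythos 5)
Your proposal is correct and takes essentially the same route as the paper: it reduces the update of $\widetilde{P}_{k+1}$ to independent scalar, strongly convex, piecewise-quadratic proximal problems for each entry and resolves them branch by branch, with the thresholds $\gamma w_{ij}b_1/\rho_k<0<\gamma w_{ij}b_2/\rho_k$ delimiting the three regimes. The paper merely phrases the same case analysis through the monotonicity of $\varphi(P_{ij})$ on the two half-lines, separately for $\Omega_{ij}^k\geq 0$ and $\Omega_{ij}^k<0$, which matches your stationary-point and boundary-case check.
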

\begin{proof}
By the definition of $\widetilde{P}_{k+1}$, we have
\begin{align*}
  \widetilde{P}_{k+1}&=\mathbf{prox}_{\tau_{k}\gamma g_{\mathcal{Q}}}^{\mathbb{R}^{mn\times 1}}(\widetilde{P}_{k}-\tau_{k}\mathcal{B}^{\top}\bar{\lambda}_{k+1})\\
  &=\left(\underset{P_{ij}\in\mathbb{R}}{\operatorname*{argmin}} \left\{\gamma w_{ij} g_2(P_{ij})+\frac{\rho_k}{2}\left(P_{ij}-\Omega^k_{ij}\right)^2\right\}\right)_{mn},
\end{align*}
Noticing the fact that for all $x,y\in\mathbb{R}$ and $\alpha\in[0,1]$
$$\vert \alpha x+(1-\alpha)y\vert^2=\alpha |x|^2+(1-\alpha)|y|^2-\alpha(1-\alpha)|x-y|^2,$$
it is obvious that optimization problem
\begin{equation}\label{sub}
\hat{P}_{ij;k+1}^*\in\min_{P_{ij}\in\mathbb{R}} \gamma w_{ij}g_2(P_{ij})+ \frac{\rho_k}{2}\left(x-\Omega^k_{ij}\right)^2
\end{equation}
is strongly convex. Thus, $\hat{P}_{ij;k+1}^*$ exists and is unique.
Denote $\varphi(P_{ij})=\gamma w_{ij} g_2(P_{ij})+ \frac{\rho_k}{2}\left(P_{ij}-\Omega^k_{ij}\right)^2$. When $\Omega_{ij}^k\geq 0$, we can obtain
\begin{equation*}
  \varphi(P_{ij})=\left\{
  \begin{aligned}
  &\left(\frac{\gamma w_{ij} a_1+\rho_k}{2}\right)P_{ij}^2+\left(\gamma w_{ij} b_1-\rho_k\Omega_{ij}^k\right)P_{ij}\\
  &\qquad\qquad\qquad+\frac{\rho_k}{2}{\Omega_{ij}^k}^2,~{\rm if}~P_{ij}\leq 0,\\
  &\left(\frac{\gamma w_{ij} a_2+\rho_k}{2}\right)P_{ij}^2+\left(\gamma w_{ij} b_2-\rho_k\Omega_{ij}^k\right)P_{ij}\\
  &\qquad\qquad\qquad+\frac{\rho_k}{2}{\Omega_{ij}^k}^2,~{\rm if}~P_{ij}> 0.
  \end{aligned}
  \right.
\end{equation*}
Hence, $\varphi(P_{ij})$ monotonously decreases on $(-\infty,0]$. Moreover, if $\Omega_{ij}^k<\frac{\gamma w_{ij} b_2}{\rho_k}$, $\varphi(P_{ij})$ will monotonously increase on $[0,+\infty)$, and if $\Omega_{ij}^k\geq\frac{\gamma w_{ij} b_2}{\rho_k}$, $\varphi(P_{ij})$ will monotonously decrease on $\left[0,\frac{\rho_k\Omega_{ij}^k-\gamma w_{ij} b_2}{\gamma w_{ij} a_2+\rho_k}\right]$, while increase on $\left[\frac{\rho_k\Omega_{ij}^k-\gamma w_{ij} b_2}{\gamma w_{ij} a_2+\rho_k},+\infty\right)$. Consequently, when $\Omega_{ij}^k\geq 0$, we have
\begin{equation*}
  \hat{P}_{ij;k+1}^*=\left\{
  \begin{aligned}
  &0,~~~~{\rm if}~\Omega_{ij}^k<\frac{\gamma w_{ij} b_2}{\rho_k},\\
  &\frac{\rho_k\Omega_{ij}^k-\gamma w_{ij} b_2}{\gamma w_{ij} a_2+\rho_k},~~~~{\rm if}~\Omega_{ij}^k\geq\frac{\gamma w_{ij} b_2}{\rho_k}.
  \end{aligned}
  \right.
\end{equation*}
When $\Omega_{ij}^k< 0$, by the same token, we have
\begin{equation*}
  \hat{P}_{ij;k+1}^*=\left\{
  \begin{aligned}
  &0,~~~~{\rm if}~\Omega_{ij}^k>\frac{\gamma w_{ij} b_1}{\rho_k},\\
  &\frac{\rho_k\Omega_{ij}^k-\gamma w_{ij} b_1}{\gamma w_{ij} a_1+\rho_k},~~~~{\rm if}~\Omega_{ij}^k\leq\frac{\gamma w_{ij} b_1}{\rho_k}.
  \end{aligned}
  \right.
\end{equation*}
Here, we finish the proof.
\end{proof}

\begin{breakablealgorithm}
    \caption{Piecewise Quadratic Relaxation Sparse-Feedback LQ}
    \begin{algorithmic}
    \State \textbf{Given} : Initial point $\widetilde{W}_0\in\Gamma^p,v_0\in\Gamma^p,y_0\in\mathbb{R}^{mn},w_0\in\mathbb{R}^{mn},\lambda_k\in\mathbb{R}^{mn},\theta_0=1,\beta_0=\mu_{g_{\mathcal{Q}}},\gamma_0>0$, smoothness parameter $L_f$
    \State\textbf{Result} : $\widetilde{W}_k$, $\widetilde{P}_k$
    \For{$k=0,1,2,...$}
        \State Compute $\alpha_k$ by (\ref{f1_acc})
        \State Compute $\eta_{g,k},\widetilde{y}_k,\widetilde{\eta}_{f,k},\widetilde{v}_k$ by (\ref{f2}), (\ref{f3}), (\ref{f4}), (\ref{f5})
        \State Determine $u_k=(1+\alpha_k)^{-1}(\widetilde{W}_k+\alpha_kv_k)$,\\
        ~~~~~~~~~~~~~~~~~\hspace{0.25em}$d_k=\nabla f_1(u_k)+\mathcal{A}^\top\lambda_k$
        \State Update $v_{k+1}\leftarrow$Algorithm \ref{alg1}
        \State Update $\widetilde{W}_{k+1}=(1+\alpha_k)^{-1}(\widetilde{W}_k+\alpha_kv_{k+1})$
        \State Update $\bar{\lambda}_{k+1}=\lambda_k+\alpha_k/\theta_k(\mathcal{A}v_{k+1}+\mathcal{B}w_k)$
        \State Update $\widetilde{P}_{k+1}$ by (\ref{acc_g})
        \State Update $\lambda_{k+1}=\lambda_k+\alpha_k/\theta_k(\mathcal{A}v_{k+1}+\mathcal{B}w_{k+1})$
        \State Update $\theta_{k+1},\gamma_{k+1},\beta_{k+1}$ by (\ref{para})
        \If{Stopping Criterion$==$True}\\
           ~~~~~~~~~~\Return $\widetilde{W}_{k+1},\widetilde{P}_{k+1}$
        \EndIf

    \EndFor
    \end{algorithmic}
    \label{alg3}
\end{breakablealgorithm}

Interestingly, the only difference between Algorithm \ref{alg2} and Algorithm \ref{alg3} is the update of $\widetilde{P}_{k+1}$. However, the former holds $\mathcal{O}({1}/{k})$ convergence rate, while the latter holds $\mathcal{O}({1}/{k^2})$ convergence rate. Note that Nesterov GD  is an extension of vanilla GD by introducing an additional sequence and modifying the fundamental framework;  Algorithm \ref{alg2} and Algorithm \ref{alg3} have a completely consistent framework. We have established a direct relationship between the acceleration of solving sparse-feedback LQ problems and the design of strongly convex relaxation problems, and to our best knowledge, this phenomenon has not been reported in the field of sparse optimization.
Concretely, the convergence analysis of Algorithm \ref{alg3} is introduced by the following proposition.

\begin{proposition}\label{thm_acc}
   For all $i,j$, let $w_{ij}\neq 0$ and $a_1>0,a_2>0,b_1<0<b_2$.
   Under the initial condition
  $\gamma_0>0$, $\beta_0=\mu_{g_{\mathcal{Q}}}>0$
  and the condition
  \begin{equation}\label{f1_acc}
    \Vert \mathcal{B}\Vert^{2}\alpha_{k}^{2}=\beta_{k}\theta_{k},
  \end{equation}
  it holds
  \begin{equation*}
  \begin{aligned}
    &\|\mathcal{A}\widetilde{W}_k+\mathcal{B}\widetilde{P}_k\|\leq\theta_k \widetilde{C}_1,\\
    &|F(\widetilde{W}_k,\widetilde{P}_k)-F^*|\leq\theta_k\widetilde{C}_2
  \end{aligned}
  \end{equation*}
  with $F(\widetilde{W},\widetilde{P})=f(\widetilde{W})+h(\widetilde{P})$ and constants $\widetilde{C}_1, \widetilde{C}_2$.
  Above, $\theta_k$ satisfies
  \begin{equation*}
  \theta_k\lesssim\min\left\{\frac{\|\mathcal{B}\|}{\sqrt{\beta_0}k},\frac{\|\mathcal{B}\|^2}{\mu_{g_{\mathcal{Q}}}k^2}\right\},
  \end{equation*}
  provided that $\beta_{0}\leq \left\|\mathcal{B}\right\|^{2}$.
\end{proposition}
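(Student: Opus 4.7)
The plan is to treat Proposition \ref{thm_acc} as an instance of the primal-dual splitting convergence theorem (Theorem~4.1 of \cite{ref2}) applied to problem \eqref{acc_sparse_LQ}, with the crucial observation that the piecewise quadratic relaxation endows the $y$-block with genuine strong convexity, upgrading the rate beyond what was available in Proposition \ref{thm_l_1}. Concretely, I would first cast \eqref{acc_sparse_LQ} into the separable template \eqref{sep_opt} by identifying $x=\widetilde{W}$, $y=\widetilde{P}$, $\bar f=f=f_1+f_2$, $\bar g=\gamma g_{\mathcal{Q}}$, $A=\mathcal A$, $B=\mathcal B$, $b=0$, and verify the hypotheses one at a time.

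The verification has three ingredients. First, since $f_1(\widetilde W)=\langle\operatorname{vec}(R),\widetilde W\rangle$ is linear, $\nabla f_1$ is constant so one may take $\mu_f=L_f=0$, while $f_2$ is a sum of indicators of PSD cones, hence proper closed convex. Second, a coordinatewise check of $g_2$ (the univariate piece defining $g_{\mathcal Q}$) shows that under $a_1,a_2>0$ and $b_1<0<b_2$ the Hessian of $g_2$ is bounded below by $\min\{a_1,a_2\}$ on each half-line, so $g_{\mathcal{Q}}$ is strongly convex with modulus $\mu_{g_{\mathcal Q}}=\min_{i,j}\{w_{ij}\min\{a_1,a_2\}\}>0$ (this is precisely why $w_{ij}\neq 0$ is required). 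Third, the initialization $\beta_0=\mu_{g_{\mathcal Q}}$ together with relation \eqref{f1_acc} matches the step-size rule of \eqref{Luo}–\eqref{para}. With these in place, the error bounds $\|\mathcal A\widetilde W_k+\mathcal B\widetilde P_k\|\le\theta_k\widetilde C_1$ and $|F(\widetilde W_k,\widetilde P_k)-F^*|\le\theta_k\widetilde C_2$ follow verbatim from Theorem~4.1 of \cite{ref2}, exactly as in the proof of Proposition \ref{thm_l_1}.

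The acceleration then reduces to estimating the sequence $\theta_k$ governed by the discrete ODE system \eqref{para}. I would solve this in two complementary ways. Using only the equation $\theta_{k+1}-\theta_k=-\alpha_k\theta_{k+1}$ and substituting \eqref{f1_acc} gives $\theta_{k+1}^{-1/2}-\theta_k^{-1/2}\gtrsim \sqrt{\beta_0}/\|\mathcal B\|$ after routine telescoping, yielding the first bound $\theta_k\lesssim \|\mathcal B\|/(\sqrt{\beta_0}k)$. Exploiting, in addition, that $\beta_k\to\mu_{g_{\mathcal Q}}$ at a geometric rate under the third equation of \eqref{para} (whose continuous-time limit is $\beta'=\mu_{g_{\mathcal Q}}-\beta$), one obtains, for $k$ large, $\alpha_k^2\asymp\mu_{g_{\mathcal Q}}\theta_k/\|\mathcal B\|^2$, and feeding this back into the $\theta$ recursion upgrades the telescoping increment to $\theta_{k+1}^{-1}-\theta_k^{-1}\gtrsim\mu_{g_{\mathcal Q}}/\|\mathcal B\|^2$, which integrates to $\theta_k\lesssim\|\mathcal B\|^2/(\mu_{g_{\mathcal Q}}k^2)$. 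Taking the minimum of the two bounds gives the claimed rate. The constraint $\beta_0\le\|\mathcal B\|^2$ is the standard admissibility condition ensuring that $\alpha_k$ produced by \eqref{f1_acc} is consistent with the monotonicity needed to start the telescoping argument.

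The main obstacle I expect is not the plug-in of Theorem~4.1 of \cite{ref2} but the careful handling of the parameter system \eqref{para} in the accelerated regime: one must simultaneously control three coupled sequences $(\theta_k,\beta_k,\alpha_k)$ and extract the sharp $1/k^2$ bound, which requires proving that $\beta_k$ is bounded away from zero from the first iteration onward (guaranteed here by $\beta_0=\mu_{g_{\mathcal Q}}$) and then carrying out the two-stage telescoping above. A secondary technical point is that, strictly speaking, \eqref{Luo} assumes exact resolution of the $v_{k+1}$-subproblem, whereas Algorithm \ref{alg1} produces it only approximately; for this proposition I would suppress that issue by assuming the inner tolerance $\epsilon$ is driven to zero (equivalently, by appealing to the summable-error extension of the base theorem), since Proposition \ref{thm_innerconverge} supplies a rate for the inner loop that is easily tightened fast enough not to perturb the outer rate.
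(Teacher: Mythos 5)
Your proposal is correct and follows essentially the same route as the paper: verify that $f_1$ is linear (hence $L_{f_1}=0$), that $g_{\mathcal{Q}}$ is $\mu_{g_{\mathcal{Q}}}$-strongly convex under the stated sign conditions, and then invoke Theorem~4.1 of the primal-dual splitting reference, exactly as in Proposition~\ref{thm_l_1}. Your additional telescoping analysis of the parameter system \eqref{para} merely re-derives the $\theta_k$ bounds that the cited theorem already supplies, so it does not constitute a different approach.
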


\begin{proof}
Noticing that $f_1(\widetilde{W})$ is a linear function, $\nabla f_1$ is Lipschitz continuous with parameter $L_{f_1}=0$. In addition, $g_{\mathcal{Q}}$ is $\mu_{g_{\mathcal{Q}}}$-strongly convex. Then, this proposition is a direct corollary of Theorem 4.1 of \cite{ref2}.
\end{proof}

\section{Directly Optimizing $\ell_0$-penalty}\label{section6}

{
Reviewing the big picture of this paper, we want to study the sparse-feedback LQ problem by solving optimization problem \eqref{sparse_feedback_4}, which can be written by
\begin{equation}\label{warmup_l0}
  \begin{aligned}
    \min_{W\in\mathbb{S}^p}~~&\langle R,W\rangle+\gamma \Vert V_1WV_2^\top\Vert_0\\
    {\rm s.t.}~~&W\in\mathscr{C}.
  \end{aligned}
\end{equation}
In this section, problem \eqref{warmup_l0} will be directly studied without relaxation.
By the following discussions, we show that a series of coordinatewise convex problems can serve as an approximation for problem \eqref{warmup_l0}.
Additionally, the variational properties of problem \eqref{warmup_l0} are of concern.
}

\begin{definition}
Let $f\colon\mathcal{D}\to\mathbb{R}$ be a function where $\mathcal{D}\subseteq\mathbb{R}^m$ is a convex set. The directional derivative of $f$ at point $x$ in direction $d$ is defined by
\begin{equation*}
  f'(x;d)\doteq\underset{\lambda\downarrow 0}{\rm liminf}\frac{f(x+\lambda d)-f(x)}{\lambda}.
\end{equation*}
\end{definition}

\begin{definition}
Let $f\colon\mathcal{D}\to\mathbb{R}$ be a function where $\mathcal{D}\subseteq\mathbb{R}^m$ is a convex set. The point $x$ is a stationary point of $f(\cdot)$ if $f'(x;d)\geq 0$ for all $d$ such that $x+d\in\mathcal{D}$.
\end{definition}

\begin{definition}
$z\in{\rm dom}f\subseteq\mathbb{R}^m$ is the coordinatewise minimizer of $f$ with respective to the coordinates in $\mathbb{R}^{m_1},\dots,\mathbb{R}^{m_n}$, $m_1+\dots+m_n=m$ if $\forall k=1,\dots,n$
\begin{equation*}
  f(z+d_k^0)\geq f(z)~\forall d_k\in\mathbb{R}^{m_k}~{\rm with}~z+d_k^0\in{\rm dom}f,
\end{equation*}
where $d_k^0=(0,\dots,d_k,\dots,0)$.
\end{definition}

\begin{definition}
The function $f\colon \mathbb{R}^m\to\mathbb{R}$ is regular at the point $z\in{\rm dom}f$ with respective to the coordinates $m_1,m_2,\dots,m_n$,$m_1+\dots+m_n=m$ if $f'(z;d)\geq0$ for all $d=(d_1,d_2,\dots,d_n)$ with $f'(z;d_k^0)\geq 0$, where $d_k^0\doteq(0,\dots,d_k,\dots,0)$ and $d_k\in\mathbb{R}^{m_k}$ for all $k$.
\end{definition}

\begin{definition}
The function is lower semi-continuous and is the greatest of all the lower semi-continuous functions $g$ such that $g\leq f$. It is called the lower closure of $f$, denoted by ${\rm cl}f$.
\end{definition}

\begin{definition}
For any sequence $\{f^v\}_{v\in\mathbb{N}}$ of functions on $\mathbb{R}^n$, the lower epi-limit ${\rm e\text{-}liminf}_v f^v$ is the function having as its epigraph the outer limit of the sequence of sets ${\rm epi}f^v$:
$${\rm epi}({\rm e\text{-}liminf}_v f^v)\doteq {\rm limsup}_v({\rm epi}f^v). $$
The upper epi-limit ${\rm e\text{-}limsup}_v f^v$ is the function having as its epigraph the inner limit of the sets ${\rm epi}f^v$:
$${\rm epi}({\rm e\text{-}limsup}_v f^v)\doteq {\rm liminf}_v({\rm epi}f^v). $$
When these two functions coincide, the epi-limits function ${\rm e\text{-}lim}f^v$ is said to exist:
$${\rm e\text{-}lim}f^v\doteq{\rm e\text{-}liminf}_v f^v={\rm e\text{-}limsup}_v f^v.$$
\end{definition}

The analysis in this section is based on the following assumption.
\begin{assumption}
$C^\top C\succ 0$.
\end{assumption}

Denoting $\Omega=\{\widetilde{W}\colon \widehat{\mathcal{A}}\widetilde{W}=0\}$, the optimization problem \eqref{warmup_l0} can be expressed in the unconstrained form by the definition of $\mathscr{C}$:
\begin{align}\label{P}
\min_{\widetilde{W}}~&\langle {\rm vec}(R),\widetilde{W}\rangle+\delta_{\Gamma_+^p}(\widetilde{W})+\delta_{\Gamma_+^n}(\widetilde{\Psi}_1)+\dots+\delta_{\Gamma_+^n}(\widetilde{\Psi}_M)\notag\\
&+\delta_{\Omega}(\widetilde{W})+\gamma \Vert (V_2\otimes V_1)\widetilde{W}\Vert_0.\tag{\textbf{P}}
\end{align}
The discontinuous nature of optimization problem (\ref{P}) makes it extremely challenging to design a suitable algorithm.
However, the following fact holds
\begin{equation*}
  \lim\limits_{\sigma \downarrow 0^+} f_{\sigma}(|y|)=I(y)=\left\{
  \begin{aligned}
  &0,~~y=0,\\
  &1,~~\text{otherwise}
  \end{aligned}
  \right.
\end{equation*}
with $f(y)=1-e^{-y}$ and $f_\sigma (y)\doteq f(y/\sigma)$, which implies that $f_\sigma(\vert\cdot\vert)$ can serve as an approximation of $\Vert \cdot\Vert_0$ for sufficiently small $\sigma$.
For $x=(x_1,\dots,x_n)^\top\in\mathbb{R}^n$, we denote $f_{\sigma}(x)=\sum_{i=1}^{n}f_{\sigma}(x_i)$.

Based on the fact that $\lim\limits_{\sigma\downarrow 0^+}f_{\sigma}(|x|)=\Vert x\Vert_0$, a class of optimization problems are proposed:
\begin{align}\label{P_sigma}
\min_{\widetilde{W}}~&\langle {\rm vec}(R),\widetilde{W}\rangle+\delta_{\Gamma_+^p}(\widetilde{W})+\delta_{\Gamma_+^n}(\widetilde{\Psi}_1)+\dots+\delta_{\Gamma_+^n}(\widetilde{\Psi}_M) \notag\\
&+\delta_{\Omega}(\widetilde{W})+\gamma f_{\sigma}(\vert(V_2\otimes V_1)\widetilde{W}\vert).\tag{$\textbf{P}_\sigma$}
\end{align}
Problem (\ref{P_sigma}) is a nonconvex optimization problem since the folded concave penalty term $f_{\sigma}$ is introduced.

To avoid optimizing nonconvex penalty term directly, \cite{ref4} proposes a convex relaxation framework for solving problem with $f_{\sigma}$ penalty.
Remarkably, by Theorem 4.8 of \cite{ref5}, $f_{\sigma}$ penalty can be expressed as
\begin{equation*}
f_{\sigma}(|x|)=\inf\limits_{y\geq 0}\{y^\top |x|+g_{\sigma}^*(-y)\}.
\end{equation*}
Using the notations
\begin{align*}
  &r_1(\widetilde{W})=\langle {\rm vec}(R),\widetilde{W}\rangle+\delta_{\Gamma_+^p}(\widetilde{W})+\delta_{\Gamma_+^n}(\widetilde{\Psi}_1)\\
  &~~~~~~~~~~~~+\dots+\delta_{\Gamma_+^n}(\widetilde{\Psi}_M)+\delta_{\Omega}(\widetilde{W}),\\
  &r_{2,\sigma}(y)=\gamma g_{\sigma}^*(-y),\\
  &h(\widetilde{W},y)=\gamma y^\top\vert(V_2\otimes V_1)\widetilde{W}\vert,
\end{align*}
problem (\ref{P_sigma}) can be denoted in a compact form:
\begin{equation}\tag{$\textbf{P}'_\sigma$}\label{P''}
\min_{\widetilde{W},y\in\mathbb{R}^{mn}_+}~H_{\sigma}(\widetilde{W},y)=r_1(\widetilde{W})+r_{2,\sigma}(y)+h(\widetilde{W},y).
\end{equation}
It is important to highlight that $r_1,r_2$ are closed and convex, but $h$ is nonconvex and nonsmooth. It seems that, compared to optimization problem (\ref{P_sigma}), (\ref{P''}) remains a nonconvex problem but introduces auxiliary variable $y$, which even makes optimization problem (\ref{P''}) more challenging. Interestingly, it is evident that $H_{\sigma}(\widetilde{W},y)$ is closed convex w.r.t. $\widetilde{W}$ and $y$, respectively. This observation suggests that by employing block coordinate descent (BCD),  (\ref{P''}) can be converted to successive convex optimization problems. Specifically, given $y_{k-1}$, update $\widetilde{W}_k$; then given $\widetilde{W}_k$, update $y_k$, i.e.,
\begin{align}
\label{BP1}&\widetilde{W}_k\in\underset{\widetilde{W}\in\mathbb{R}^{p^2}}{\operatorname*{argmin}}~r_1(\widetilde{W})+h(\widetilde{W},y_{k-1}),\\
\label{BP2}&y_k\in\underset{y\in\mathbb{R}_+^{mn}}{\operatorname*{argmin}}~r_{2,\sigma}(y)+h(\widetilde{W}_k,y).
\end{align}
It is important to underline that both (\ref{BP1}) and (\ref{BP2}) may be classified as convex optimization problems. Our objective is then to solve (\ref{BP1}) and (\ref{BP2}) alternatively in order to approximate the global minimizers of nonconvex problem (\ref{P''}). Before discussing the relation between (\ref{P''}) and  (\ref{BP1}) (\ref{BP2}), we first propose the relation between (\ref{P''}) and (\ref{P}). Denote
\begin{align*}
H_{\sigma}(\widetilde{W})&=r_1(\widetilde{W})+\gamma f_{\sigma}(\vert(V_2\otimes V_1)\widetilde{W}\vert),\\
H(\widetilde{W})&=r_1(\widetilde{W})+\gamma\Vert (V_2\otimes V_1)\widetilde{W}\Vert_0,
\end{align*}
and the following theorem proves that
\begin{equation*}
  \text{(\ref{P''})}\Rightarrow\text{(\ref{P})},~~\text{as}~\sigma\to 0^+.
\end{equation*}
\begin{theorem}\label{thm_app}
  Let $\sigma_i\downarrow 0$, then the following statements hold.
  \begin{enumerate}
    \item $\inf H_{\sigma_i}(\widetilde{W})\to\inf H(\widetilde{W})$.
    \item For $v$ in some index set $N\subseteq \mathbb{N}$, the sets ${\operatorname*{argmin}}H_{\sigma_v}$ are nonempty and form a bounded sequence with
        $\limsup_v({\operatorname*{argmin}}H_{\sigma_v})\subset{\operatorname*{argmin}} H.$
    \item For any choice of $\epsilon_i\downarrow 0$ and $\widetilde{W}_i\in\epsilon_i\text{-}{\operatorname*{argmin}}H_{\sigma_i}$, the sequence $\{\widetilde{W}_i\}_{i\in\mathbb{N}}$ is bounded and such that all its cluster points belong to ${\operatorname*{argmin}}H$.
  \end{enumerate}
\end{theorem}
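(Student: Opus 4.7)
The plan is to mirror the strategy already used in Proposition \ref{gammato0} and reduce the three conclusions to Theorem 7.33 of \cite{ref6}, which states exactly that eventual level-boundedness plus epi-convergence yields (1)--(3). So the proof reduces to two ingredients: epi-convergence $H_{\sigma_i}\stackrel{\rm e}{\longrightarrow}H$ and eventual level-boundedness of $\{H_{\sigma_i}\}$.

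For epi-convergence I would exploit the pointwise monotone structure. Fix $y\in\mathbb{R}$; as $\sigma\downarrow 0$ the map $\sigma\mapsto f_\sigma(|y|)=1-e^{-|y|/\sigma}$ is nondecreasing and tends to the indicator $I(y)\in\{0,1\}$, so coordinatewise $\gamma f_{\sigma_i}(|(V_2\otimes V_1)\widetilde W|)\uparrow \gamma\Vert(V_2\otimes V_1)\widetilde W\Vert_0$. Since $r_1$ is a fixed closed convex (hence lsc) function, this gives $H_{\sigma_i}\uparrow H$ pointwise with $H$ lsc (the $\ell_0$-term is lsc and the indicator pieces are lsc). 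The classical fact (Proposition 7.4(d) of \cite{ref6}) that a nondecreasing sequence of lsc functions with lsc pointwise supremum epi-converges to that supremum then delivers $H_{\sigma_i}\stackrel{\rm e}{\longrightarrow}H$. I would double-check the two standard inequalities: the $\liminf$-inequality follows from monotonicity applied at any fixed index $i_0$ followed by lower semicontinuity of $H_{\sigma_{i_0}}$, and the recovery sequence can be taken as the constant sequence $\widetilde W_i\equiv\widetilde W$, for which $\limsup_i H_{\sigma_i}(\widetilde W)=H(\widetilde W)$ by monotone convergence.

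For eventual level-boundedness I would reuse the coercivity argument already used in Proposition \ref{gammato0}. Under $C^\top C\succ 0$ the matrix $R$ has a strictly positive component on the $W_1$-block, so
\begin{equation*}
\langle\mathrm{vec}(R),\widetilde W\rangle=\langle R,W\rangle\;\geq\;\lambda_{\min}(R)\,\mathrm{tr}(W)\;\geq\;\lambda_{\min}(R)\,\Vert W\Vert_2
\end{equation*}
on the feasible cone $\widetilde W\in\Gamma_+^p$. Combined with $\delta_{\Gamma_+^p}$, $\delta_\Omega$ and the nonnegative term $\gamma f_{\sigma_i}\geq 0$, this shows each $H_{\sigma_i}$ is level-bounded with a bound independent of $\sigma_i$. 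Since the $H_{\sigma_i}$ are moreover closed and convex in $\widetilde W$ for each block direction (though not jointly; still, level sets are the intersection of a convex cone with a sub-level set of a linear function plus a nonnegative term, hence connected), part (c) of 7.32 of \cite{ref6} applies and yields eventual level-boundedness of the whole sequence.

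With epi-convergence and eventual level-boundedness in hand, Theorem 7.33 of \cite{ref6} gives (1) $\inf H_{\sigma_i}\to\inf H$, and for a cofinal subsequence $N\subseteq\mathbb{N}$ the argmin sets $\mathrm{argmin}\,H_{\sigma_v}$ are nonempty, bounded and satisfy $\limsup_v \mathrm{argmin}\,H_{\sigma_v}\subset\mathrm{argmin}\,H$, which is (2); finally the $\epsilon_i$-argmin statement (3) is the standard companion conclusion also stated in 7.33 (any $\epsilon$-minimizing sequence accumulates inside $\mathrm{argmin}\,H$). I expect the only delicate step to be the epi-convergence verification: the $\liminf$-inequality at a point $\widetilde W$ where some coordinate of $(V_2\otimes V_1)\widetilde W$ vanishes must be checked carefully, because a perturbing sequence $\widetilde W_i\to\widetilde W$ can make that coordinate nonzero and thereby raise the $\ell_0$ contribution — but this is exactly what lower semicontinuity of $\Vert\cdot\Vert_0$ supplies, so monotonicity plus lsc of the limit closes the argument.
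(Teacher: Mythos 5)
Your proposal is correct and follows essentially the same route as the paper's proof: monotonicity of $f_\sigma$ in $\sigma$ gives a nondecreasing sequence whose epi-limit is $H$ via Proposition 7.4 of \cite{ref6}, the coercivity bound $\langle R,W\rangle\geq\lambda_{\min}(R)\Vert W\Vert_2$ on $\Gamma_+^p$ gives eventual level-boundedness via 7.32 of \cite{ref6}, and Theorem 7.33 of \cite{ref6} yields all three conclusions. The only cosmetic difference is your aside about convexity/connectedness of level sets (borrowed from Proposition \ref{gammato0}, where the relaxed problem is convex), which is unnecessary here since uniform minorization by the coercive $r_1$ already suffices, exactly as in the paper.
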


\begin{proof}
Since for $x=(x_1,\dots,x_n)^\top \in\mathbb{R}^n$
\begin{equation*}
f_{\sigma}(x)=\sum_{i=1}^{n}f_{\sigma}(x_i),
\end{equation*}
we have $f_{\sigma_{i+1}}(x)\geq f_{\sigma_i}(x)$ for every $x\geq 0$. Hence, for every $\widetilde{W}\in\mathbb{R}^{p^2}$, it follows that $H_{\sigma_{i+1}}(\widetilde{W})\geq H_{\sigma_{i}}(\widetilde{W})$, and $\{H_{\sigma_{i}}(\widetilde{W})\}_{i\in\mathbb{N}}$ is nondecreasing. By Proposition 7.4 of \cite{ref6}, ${\operatorname*{e-lim}}_i H_{\sigma_i}$ exists and equals $\sup_i[{\operatorname*{cl}}H_{\sigma_i}]$. Based on the fact  $\lim\limits_{\sigma\downarrow 0^+}f_{\sigma}(|x|)=\Vert x\Vert_0$, it follows that $\sup_i[{\operatorname*{cl}}H_{\sigma_i}](\widetilde{W})=H(\widetilde{W})$. Obviously, for every $\sigma_i$, $H_{\sigma_i}(\widetilde{W})\geq r_1(\widetilde{W})$, and $r_1(\widetilde{W})$ is a coercive function based on the fact
\begin{equation}\label{p1}
\begin{aligned}
\langle {\rm vec}(R),\widetilde{W}\rangle&=\langle R,W\rangle\geq \lambda_{{\rm min}}(R){\rm tr}(W)\\
&\geq \lambda_{{\rm min}}(R)\Vert W\Vert_2
\end{aligned}
\end{equation}
for $W\in\mathbb{S}_+^p$. According to Exercise 7.32 of \cite{ref6}, the sequence $\{H_{\sigma_i}\}_{i\in\mathbb{N}}$ is eventually level-bounded. By noticing that $H_{\sigma_i}$ and $H$ are lower semi-continuous and proper, we finish the proof by Theorem 7.33 of \cite{ref6}.
\end{proof}
Hence, theoretically, one can obtain the minimizers of optimization (\ref{P''}) by successively solving optimization problem (\ref{P_sigma}) with $\sigma\downarrow 0$.
{
As mentioned above, we hope to solve problem \eqref{P''} by solving \eqref{BP1} and \eqref{BP2} alternately, known as BCD method.
However, due to the nonconvex and nonsmooth term $h(\widetilde{W},y)$, direct BCD may fail to converge and cause stable cyclic behavior \cite{ref7}.
It is worth to note that the BCD method can be regarded as a special case of BSUM framework (Algorithm \ref{alg4}); and hence, instead of BCD method, we will design suitable algorithm for solving \eqref{P''} under a more general framework, i.e., BSUM framework.
}

{
The auxiliary function (see Definition \ref{auxiliary}) $u_{2,\sigma}(y,X_k)$ can be selected as
\begin{equation}\label{u_2}
u_{2,\sigma}(y,X_k)=\langle {\rm vec}(R),\widetilde{W}_k\rangle+r_{2,\sigma}(y)+h(\widetilde{W}_k,y)
\end{equation}
with $X_k=(\widetilde{W}_k,y_k)$. Then, the subproblem
\begin{equation*}
  \underset{y\in\mathbb{R}^{mn}_+}{\operatorname*{argmin}}~u_{2,\sigma}(y,X_k)
\end{equation*}
within BSUM framework is equivalent to problem \eqref{BP2}, which is further equivalent to the following problem
\begin{equation}\label{q2}
\min\limits_{y\in\mathbb{R}^{mn}_+} g^*_{\sigma}(-y)+y^\top\vert(V_2\otimes V_1)\widetilde{W}_k\vert.
\end{equation}
}
\begin{lemma}\label{lemma_close}
The global minimizer $y_{k,\sigma}^*$ of optimization problem (\ref{q2}) (equivalently, (\ref{BP2})) is unique, and can be expressed as
\begin{equation}\label{y_k}
y_{k,\sigma}^*=\nabla f_{\sigma}(\vert(V_2\otimes V_1)\widetilde{W}_k\vert).
\end{equation}
\end{lemma}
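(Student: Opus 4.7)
The plan is to reduce (\ref{q2}) to a separable collection of scalar problems and then identify the minimizer via the Fenchel--Young equality underlying the representation $f_\sigma(|x|)=\inf_{y\geq0}\{y^\top |x|+g^*_\sigma(-y)\}$ from Theorem 4.8 of \cite{ref5} already used in the derivation of (\ref{P''}). Writing $c=|(V_2\otimes V_1)\widetilde{W}_k|\in\mathbb{R}^{mn}_+$, the objective of (\ref{q2}) is $\sum_{i=1}^{mn}\bigl[g^*_\sigma(-y_i)+y_i c_i\bigr]$ because $g_\sigma$ (and hence its conjugate) acts coordinatewise in the definition of $f_\sigma(x)=\sum_i f_\sigma(x_i)$. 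Hence minimizing over $y\in\mathbb{R}^{mn}_+$ amounts to solving $mn$ uncoupled scalar problems $\min_{y_i\geq 0}\{g^*_\sigma(-y_i)+y_i c_i\}$.

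Second, I would invoke the cited Fenchel representation in its scalar form: $f_\sigma(c_i)=\inf_{y_i\geq 0}\{y_i c_i+g^*_\sigma(-y_i)\}$. Since $f(t)=1-e^{-t}$ is differentiable and strictly concave on $[0,\infty)$, the rescaled function $f_\sigma(t)=f(t/\sigma)$ inherits these properties, so $f'_\sigma(t)=(1/\sigma)e^{-t/\sigma}>0$ is strictly decreasing. By the standard Fenchel--Young equality for (concave) conjugation, the infimum above is attained precisely at those $y_i$ such that $c_i$ is a supergradient of $-g^*_\sigma(-\cdot)$ at $y_i$; equivalently, at $y_i=f'_\sigma(c_i)$. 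This yields the candidate minimizer $y^*_{k,\sigma}=\nabla f_\sigma(c)=\nabla f_\sigma(|(V_2\otimes V_1)\widetilde{W}_k|)$.

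For uniqueness, I would argue in two complementary ways. Strict concavity of $f_\sigma$ implies that $f'_\sigma$ is single-valued and strictly monotone, so the attainer in the scalar Fenchel representation is unique; equivalently, $g^*_\sigma(-\cdot)$ is essentially smooth and strictly convex on the interior of its domain, making the scalar objective $y_i\mapsto g^*_\sigma(-y_i)+y_i c_i$ strictly convex with a unique minimizer. Assembling the coordinatewise unique minimizers gives the unique global minimizer $y^*_{k,\sigma}$ of (\ref{q2}) in the stated closed form.

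I do not expect a serious obstacle: the only delicate step is ensuring that the Fenchel-type representation from \cite{ref5} permits identification of the attainer via $\nabla f_\sigma$, which is standard once $f_\sigma$ is verified to be smooth and strictly concave on $[0,\infty)$; the nonnegativity constraint $y_i\geq 0$ is compatible with the representation because the supergradients of $f_\sigma$ on $[0,\infty)$ are nonnegative. Everything else reduces to coordinatewise bookkeeping.
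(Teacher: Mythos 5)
Your proposal is correct and is essentially the paper's own argument: the paper also writes the first-order condition for (\ref{q2}), inverts it via the conjugate subgradient rule ($c\in\partial g_{\sigma}^{*}(-y)$ iff $-y\in\partial g_{\sigma}(c)$), and uses smoothness of $g_{\sigma}=-f_{\sigma}$ to conclude $y_{k,\sigma}^{*}=\nabla f_{\sigma}(\vert(V_2\otimes V_1)\widetilde{W}_k\vert)$, which is exactly your Fenchel--Young attainment step, your coordinatewise reduction being only an organizational difference. Both proofs also rest on the same implicit convention that $g_{\sigma}$ is conjugated as a finite function on all of $\mathbb{R}^{mn}$ (otherwise $g_{\sigma}^{*}(-\cdot)$ fails to be strictly convex and uniqueness would degenerate at coordinates where $\vert(V_2\otimes V_1)\widetilde{W}_k\vert_i=0$), so your argument matches the paper's level of rigor.
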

\begin{proof}
Optimality condition implies that
\begin{equation*}
  0\in-\partial g^*_{\sigma}(-y_{k,\sigma}^*)+\vert(V_2\otimes V_1)\widetilde{W}_k\vert.
\end{equation*}
Since $g^*_{\sigma}$ is a proper, closed and convex function, by Proposition 11.3 of \cite{ref6}, we have
\begin{equation*}
  -y_{k,\sigma}^*\in\partial g_{\sigma}(\vert(V_2\otimes V_1)\widetilde{W}_k\vert).
\end{equation*}
Noticing that $g_{\sigma}=-f_{\sigma}$ is a smooth function on $\mathbb{R}^{mn}_+$, then subgradient operator degenerates to gradient operator, i.e.,
\begin{equation*}
  y_{k,\sigma}^*=\nabla f_{\sigma}(\vert(V_2\otimes V_1)\widetilde{W}_k\vert).
\end{equation*}
The proof is complete.
\end{proof}

We then discuss the selection of $u_{1,\sigma}(\widetilde{W},X_k)$. The proximal term is introduced, i.e.,
\begin{align}\label{u_1}
u_{1,\sigma}(\widetilde{W},X_k;\lambda)=&\langle {\rm vec}(R),\widetilde{W}\rangle+r_{2,\sigma}(y_k)+h(\widetilde{W},y_k)\notag\\
&+\Vert\widetilde{W}-\widetilde{W}_k\Vert^2/(2\lambda),
\end{align}
where $\lambda\in\mathbb{R}$ is a constant.
Now, we direct our attention on solving the subsequent optimization problem rather than (\ref{BP1})
\begin{equation}\label{q3}
\begin{aligned}
 \min\limits_{\widetilde{W}}~ &u_{1,\sigma}(\widetilde{W},X_k;\lambda)\\
 \text{s.t.}~&\widetilde{W}\in\mathcal{X}_1,
\end{aligned}
\end{equation}
with
$$\mathcal{X}_1=\{\widetilde{W}\colon \widetilde{W}\in\Gamma_+^p,\widetilde{\Psi}_i\in\Gamma_+^n,i=1,\dots, M, \widehat{A}\widetilde{W}=0\}$$
which is a closed convex set. First, we claim that $\{H_{\sigma}(\widetilde{W}_k)\}$ is monotonically nonincreasing.

\begin{lemma}
Given an initial feasible point $\widetilde{W}_0$, let $\widetilde{W}_k$ be the iterative sequence generated by Algorithm \ref{alg4} with $u_1,u_2$ given by (\ref{u_1}), (\ref{u_2}). Then
$H_{\sigma}(\widetilde{W}_{k+1})\leq H_{\sigma}(\widetilde{W}_k).$
\end{lemma}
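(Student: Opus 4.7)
The plan is to chain together three inequalities that exploit (i) the envelope/infimum representation of $f_\sigma$, (ii) the optimality of $y_k$ given by Lemma~\ref{lemma_close}, and (iii) the optimality of $\widetilde{W}_{k+1}$ as a minimizer of $u_{1,\sigma}(\cdot,X_k;\lambda)$ over $\mathcal{X}_1$. Define, for notational brevity, the joint function $\widetilde{H}_\sigma(\widetilde{W},y) = r_1(\widetilde{W})+r_{2,\sigma}(y)+h(\widetilde{W},y)$, so that by construction $H_\sigma(\widetilde{W}) = \inf_{y\geq 0} \widetilde{H}_\sigma(\widetilde{W},y)$, which is precisely the envelope representation used to derive \eqref{P''}.

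First I would verify the ``tightness'' step: by Lemma~\ref{lemma_close} the minimizer $y_k$ of $u_{2,\sigma}(y,X_k)$ equals $\nabla f_\sigma(|(V_2\otimes V_1)\widetilde{W}_k|)$, which attains the infimum in the envelope formula applied at $\widetilde{W}_k$. This yields the identity $\widetilde{H}_\sigma(\widetilde{W}_k,y_k)=H_\sigma(\widetilde{W}_k)$. Next, comparing $u_{1,\sigma}(\widetilde{W}_{k+1},X_k;\lambda)\leq u_{1,\sigma}(\widetilde{W}_k,X_k;\lambda)$ (by definition of $\widetilde{W}_{k+1}$ as the minimizer, noting $\widetilde{W}_k\in\mathcal{X}_1$ is feasible) and canceling the common term $r_{2,\sigma}(y_k)$ gives
\begin{equation*}
\langle {\rm vec}(R),\widetilde{W}_{k+1}\rangle+h(\widetilde{W}_{k+1},y_k)+\tfrac{1}{2\lambda}\|\widetilde{W}_{k+1}-\widetilde{W}_k\|^2\leq \langle {\rm vec}(R),\widetilde{W}_k\rangle+h(\widetilde{W}_k,y_k).
\end{equation*}
Because $\widetilde{W}_{k+1}\in\mathcal{X}_1$ as well, the indicator contributions in $r_1$ vanish at both iterates, so adding $r_{2,\sigma}(y_k)$ to both sides converts this into $\widetilde{H}_\sigma(\widetilde{W}_{k+1},y_k)+\tfrac{1}{2\lambda}\|\widetilde{W}_{k+1}-\widetilde{W}_k\|^2\leq \widetilde{H}_\sigma(\widetilde{W}_k,y_k)$.

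Finally, the infimum property of the envelope yields $H_\sigma(\widetilde{W}_{k+1})\leq \widetilde{H}_\sigma(\widetilde{W}_{k+1},y_k)$, which combined with the preceding display and the tightness identity gives
\begin{equation*}
H_\sigma(\widetilde{W}_{k+1})\leq \widetilde{H}_\sigma(\widetilde{W}_{k+1},y_k)\leq \widetilde{H}_\sigma(\widetilde{W}_k,y_k)-\tfrac{1}{2\lambda}\|\widetilde{W}_{k+1}-\widetilde{W}_k\|^2\leq H_\sigma(\widetilde{W}_k).
\end{equation*}
The main obstacle I anticipate is bookkeeping: one must argue carefully that both iterates stay feasible in $\mathcal{X}_1$ (so the indicators in $r_1$ do not contribute pathologically) and that the BSUM index pattern indeed yields the pairing $(\widetilde{W}_{k+1},y_k)$ at the intermediate step; this is where the role of the proximal term $\|\widetilde{W}-\widetilde{W}_k\|^2/(2\lambda)$ (with $\lambda>0$) is indispensable, since it both makes the $\widetilde{W}$-subproblem well-posed and provides the nonnegative slack that makes the descent strict unless $\widetilde{W}_{k+1}=\widetilde{W}_k$. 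Once feasibility and the envelope identity are in hand, the monotonicity follows by assembling the three inequalities above.
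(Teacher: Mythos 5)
Your argument is correct and is essentially the paper's own proof: both rest on the sandwich $H_\sigma(\widetilde{W})\leq \widetilde{H}_\sigma(\widetilde{W},y_k)\leq u_{1,\sigma}(\widetilde{W},X_k;\lambda)$ from the envelope representation of $f_\sigma$, tightness at $(\widetilde{W}_k,y_k)$ via Lemma~\ref{lemma_close}, and the optimality of $\widetilde{W}_{k+1}$ for the proximal subproblem with $\widetilde{W}_k$ feasible. The only difference is cosmetic: you carry the proximal slack $\tfrac{1}{2\lambda}\|\widetilde{W}_{k+1}-\widetilde{W}_k\|^2$ explicitly, which gives a slightly stronger (sufficient-decrease) inequality than the statement requires.
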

\begin{proof}
Let $F(\widetilde{W},X_k)=\langle {\rm vec}(R),\widetilde{W}\rangle+r_{2,\sigma}(y_k)+h(\widetilde{W},y_k)$. According to the fact that $y_k=\nabla f_{\sigma}(\vert(V_2\otimes V_1)\widetilde{W}_k\vert)$,
$F(\widetilde{W},X_k)$ can be equivalently regarded as $F(\widetilde{W},\widetilde{W}_k)$. Obviously, for all $\widetilde{W}$, we have
$$u_{1,\sigma}(\widetilde{W},X_k;\lambda)\geq F(\widetilde{W},\widetilde{W}_k)\geq H_{\sigma}(\widetilde{W}).$$
Hence, the following inequality holds
\begin{equation*}
  \begin{aligned}
  &H_{\sigma}(\widetilde{W}_{k+1})\leq u_{1,\sigma}(\widetilde{W}_{k+1},X_k;\lambda)\leq u_{1,\sigma}(\widetilde{W}_{k},X_k;\lambda)\\
  &=F(\widetilde{W}_k,\widetilde{W}_k)=H_{\sigma}(\widetilde{W}_k).
  \end{aligned}
\end{equation*}
The proof is completed.
\end{proof}

Obviously, $\{H_{\sigma}(\widetilde{W}_k)\}$ is bounded below by $0$. Hence, by basic mathematic analysis, $H_{\sigma}(\widetilde{W}_k)\downarrow H_{\sigma}^{\infty}$, where $H_{\sigma}^{\infty}$ is a positive constant. An inquiry that arises is whether or not $H_{\sigma}^{\infty}=H_{\sigma}^*\doteq\min H_{\sigma}(\widetilde{W})$. Regretfully, such question remains open, but we have the following result.

\begin{theorem}\label{thm_converge}
  Every cluster point $z=(z_1,z_2)$ of the iteration generated by Algorithm \ref{alg4} with $u_1,u_2$ given by (\ref{u_1}), (\ref{u_2}) is a coordinatewise minimizer of the optimization problem (\ref{P''}). In addition, if $H_{\sigma}(\cdot,\cdot)$ is regular at $z$, then $z$ is a stationary point of (\ref{P''}).
\end{theorem}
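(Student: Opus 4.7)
The proof will proceed by casting the $(\widetilde{W},y)$-alternation as a standard two-block BSUM run on problem \eqref{P''} with the majorants $u_{1,\sigma}$ and $u_{2,\sigma}$, and then invoking the BSUM convergence machinery of \cite{ref9}. First I would verify that the pair $\{u_{1,\sigma}(\cdot,X_k;\lambda),\,u_{2,\sigma}(\cdot,X_k)\}$ satisfies all four conditions of Definition \ref{auxiliary}. Conditions (1) and (4) are essentially by inspection: at feasible $\widetilde{W}_k$ one has $r_1(\widetilde{W}_k)=\langle\mathrm{vec}(R),\widetilde{W}_k\rangle$ and the proximal term vanishes, so $u_{1,\sigma}(\widetilde{W}_k,X_k;\lambda)=H_\sigma(\widetilde{W}_k,y_k)$, and likewise $u_{2,\sigma}(y_k,X_k)=H_\sigma(\widetilde{W}_k,y_k)$; continuity in $(\widetilde{W},X_k)$ and $(y,X_k)$ is immediate. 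Condition (2), the global upper-bound majorization, follows because the proximal penalty $\|\widetilde{W}-\widetilde{W}_k\|^2/(2\lambda)$ is nonnegative while $u_{2,\sigma}$ already equals $H_\sigma(\widetilde{W}_k,\cdot)$ on its block. Condition (3) on directional derivatives reduces to checking that the gradient of the proximal term vanishes at $\widetilde{W}=\widetilde{W}_k$, which is clear.

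Second, I would show that each block subproblem admits a unique solution, which is exactly why the proximal regularizer was put into $u_{1,\sigma}$. The $\widetilde{W}$-subproblem is now strongly convex on the closed convex set $\mathcal{X}_1$ with modulus $1/\lambda$, hence admits a unique minimizer, and the $y$-subproblem is resolved in closed form by Lemma \ref{lemma_close}, yielding the unique $y_{k,\sigma}^*=\nabla f_\sigma(|(V_2\otimes V_1)\widetilde{W}_k|)$. Together with the descent lemma already established, namely $H_\sigma(\widetilde{W}_{k+1})\le H_\sigma(\widetilde{W}_k)$, and the coercivity of $r_1$ inherited from $\lambda_{\min}(R)>0$ (used in \eqref{p1} of the proof of Theorem \ref{thm_app}), the iterates lie in a bounded set and cluster points exist.

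Third, I would invoke Theorem 2 of \cite{ref9}: under (i) the auxiliary-function conditions just verified, (ii) per-block uniqueness of subproblem solutions, and (iii) boundedness of the iterates, every cluster point $z=(z_1,z_2)$ of the BSUM sequence is a coordinatewise minimizer of $H_\sigma$. This gives the first claim of the theorem. For the second claim, given regularity of $H_\sigma$ at $z$, by definition any $d=(d_1,d_2)$ with $z+d$ feasible satisfies $H_\sigma'(z;d)\ge 0$ whenever $H_\sigma'(z;d_k^0)\ge 0$ for each block $d_k^0$. Since $z$ is a coordinatewise minimizer, each $H_\sigma'(z;d_k^0)\ge 0$ holds, and therefore $H_\sigma'(z;d)\ge 0$ in every feasible direction, i.e.\ $z$ is a stationary point of \eqref{P''}.

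The main obstacle is not any single algebraic computation but ensuring the BSUM hypotheses are all met simultaneously in the present nonsmooth, nonconvex setting: the joint function $h(\widetilde{W},y)=\gamma y^\top|(V_2\otimes V_1)\widetilde{W}|$ is nonconvex and nonsmooth in $\widetilde{W}$ through the absolute value, so without the proximal term in $u_{1,\sigma}$ the per-block subproblem could fail to have a unique minimizer and the standard BSUM convergence theorem would not apply; the proximal penalty $\|\widetilde{W}-\widetilde{W}_k\|^2/(2\lambda)$ is the key device that restores strong convexity, uniqueness, and hence the applicability of \cite{ref9}. A secondary subtlety is that boundedness of $\{\widetilde{W}_k\}$ must be derived from the coercivity of $r_1$ rather than from compactness of the feasible set, which is why the hypothesis $C^\top C\succ 0$ is invoked at this stage.
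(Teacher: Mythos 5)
Your proposal is correct and follows essentially the same route as the paper: both reduce the alternation to BSUM with the auxiliary functions \eqref{u_1}--\eqref{u_2}, use the coercivity bound \eqref{p1} to get compactness/boundedness and Lemma \ref{lemma_close} for uniqueness of a block subproblem, and then conclude by Theorem 2 of \cite{ref9}, with regularity giving stationarity exactly as you argue. One small correction: for fixed $y_k\geq 0$ the coupling term $h(\cdot,y_k)$ is convex in $\widetilde{W}$ (a weighted $\ell_1$-type function of a linear image), and the paper invokes the variant of Theorem 2 of \cite{ref9} requiring a compact sublevel set plus uniqueness in only one block (the $y$-block via Lemma \ref{lemma_close}), so the proximal term is not needed for convergence --- as the paper's own remark states, it is added only to make the inner $\widetilde{W}$-subproblem strongly convex and cheaper to solve --- although your alternative of using it to secure uniqueness in both blocks is also a valid way to meet the hypotheses.
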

\begin{proof}
For a feasible $X_0=(\widetilde{W}_0,y_0)$, by (\ref{p1}), the sublevel set
$$\mathcal{X}^0\doteq\{X=(\widetilde{W},y)\colon H_{\sigma}(X)\leq H_{\sigma}(X_0)\}$$
is compact. By Lemma \ref{lemma_close},  optimization problem (\ref{BP2}) has a unique solution for any point $X_{r-1}\in\mathcal{X}$. Then, the proof is completed by Theorem 2 of \cite{ref9}.
\end{proof}

There exists a significant gap between coordinatewise minimizer and stationary point (or local minimizer). When the objective function is regular at coordinatewise minimizer, the coordinatewise minimum becomes a local minimum by the above theorem. In Lemma 3.1 of \cite{ref10}, there is a few discussion about regularity property of a Gateaux-differentiable function, but $H_{\sigma}(\widetilde{W},y)$ is generally not Gateaux-differentiable in this paper. Furthermore, if we assume that $H_{\sigma}^{\infty}=H_{\sigma}^*$ holds, we can obtain the following theorem.

\begin{theorem}\label{thm_opt}
  If $H_{\sigma}^{\infty}=H_{\sigma}^*$, then any cluster point of $\{\widetilde{W}_k\}$
  belongs to ${\rm argmin}H_{\sigma}(\widetilde{W})$.
\end{theorem}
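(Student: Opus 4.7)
The plan is to combine the lower semicontinuity of $H_\sigma$ with the monotone convergence of $\{H_\sigma(\widetilde{W}_k)\}$ established in the previous lemma. Let $\widetilde{W}^{\infty}$ be an arbitrary cluster point of $\{\widetilde{W}_k\}$, and extract a subsequence $\widetilde{W}_{k_j}\to \widetilde{W}^{\infty}$ (existence of which is guaranteed by compactness of the sublevel set $\mathcal{X}^0$ used in the proof of Theorem \ref{thm_converge}, since $H_\sigma(\widetilde{W}_k)$ is nonincreasing and hence $\widetilde{W}_k\in\mathcal{X}^0$ for all $k$).

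First I would verify that $H_\sigma$ is proper and lower semicontinuous on $\mathbb{R}^{p^2}$. The function $r_1(\widetilde{W})$ is a sum of the linear functional $\langle\mathrm{vec}(R),\widetilde{W}\rangle$ with indicator functions of the closed sets $\Gamma_+^p$, $\Gamma_+^n$ (composed with the linear maps $\widetilde{W}\mapsto\widetilde{\Psi}_i$), and the closed affine subspace $\Omega$, and is therefore lsc. The penalty $\gamma f_\sigma(|(V_2\otimes V_1)\widetilde{W}|)$ is in fact continuous, since $f_\sigma(y)=1-e^{-y/\sigma}$ composed with $|\cdot|$ and the linear map $(V_2\otimes V_1)$ is continuous. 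Hence $H_\sigma$ is lsc, and evaluating along the subsequence gives
\begin{equation*}
H_\sigma(\widetilde{W}^{\infty})\;\le\;\liminf_{j\to\infty} H_\sigma(\widetilde{W}_{k_j}).
\end{equation*}

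Next, the previous lemma shows that the whole sequence $\{H_\sigma(\widetilde{W}_k)\}$ is monotonically nonincreasing and hence converges to $H_\sigma^{\infty}$. By the hypothesis $H_\sigma^{\infty}=H_\sigma^{*}$, every subsequence converges to the same limit, so $\liminf_{j}H_\sigma(\widetilde{W}_{k_j})=H_\sigma^{*}$. Combining with the displayed inequality yields $H_\sigma(\widetilde{W}^{\infty})\le H_\sigma^{*}$. The reverse inequality $H_\sigma(\widetilde{W}^{\infty})\ge H_\sigma^{*}$ is automatic from the definition of the infimum, so $H_\sigma(\widetilde{W}^{\infty})=H_\sigma^{*}$ and $\widetilde{W}^{\infty}\in\mathrm{argmin}\,H_\sigma$, as desired.

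The argument is essentially a compactness-plus-lsc argument, so there is no deep obstacle. The only point requiring care is the lsc of $H_\sigma$ on its entire effective domain, and in particular the fact that the hard feasibility constraints $\widetilde{W}\in\Gamma_+^p$, $\widetilde{\Psi}_i\in\Gamma_+^n$ and $\widehat{\mathcal{A}}\widetilde{W}=0$ (all encoded into $r_1$ via indicator functions) survive the passage to the limit; this is immediate from closedness of these sets. Everything else is a direct consequence of the monotone convergence of the objective values and the assumption $H_\sigma^{\infty}=H_\sigma^{*}$.
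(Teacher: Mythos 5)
Your proposal is correct and follows essentially the same route as the paper's proof: compactness of the sublevel set of $H_\sigma$, monotone convergence of the objective values $H_\sigma(\widetilde{W}_k)\downarrow H_\sigma^{\infty}=H_\sigma^{*}$, and lower semicontinuity of $H_\sigma$ to pass to the limit along a convergent subsequence. Your explicit verification of the lsc of $H_\sigma$ and the framing in terms of an arbitrary cluster point are just slightly more detailed versions of the steps the paper states without elaboration.
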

\begin{proof}
Since $f_{\sigma}(\vert(V_2\otimes V_1)\widetilde{W}\vert)\geq 0$, it is obvious that, for all $\alpha\in\mathbb{R}$, ${\rm lev}_{\leq\alpha}H_{\sigma}(\widetilde{W})$ is compact. Then the set
$\{\widetilde{W}\colon H_{\sigma}(\widetilde{W})\leq H_{\sigma}(\widetilde{W_0})\}$
is compact. Hence, by $H_{\sigma}(\widetilde{W}_k)\downarrow H_{\sigma}^{\infty}$, $\{\widetilde{W}_k\}_{k\in\mathbb{N}}$ is compact. Thus, there exists a subsequence $\{\widetilde{W}_{k_n}\}_{n\in\mathbb{N}}$ of $\{\widetilde{W}_k\}$ that converges to $\widetilde{W}_{\infty}$. Based on the fact that $H_{\sigma}(\widetilde{W})$ is lower semi-continuous, we have
$$H_{\sigma}^*=H_{\sigma}^{\infty}=\underset{k\to\infty}{\rm liminf}H_{\sigma}(\widetilde{W}_k)\geq H_{\sigma}(\widetilde{W}_{\infty})\geq H_{\sigma}^*.$$
The proof is completed.
\end{proof}
Our focus is now directed towards solving subproblem (\ref{q3}).
By introducing augmented variable $\widetilde{P}=(V_2\otimes V_1)\widetilde{W}$, \eqref{q3} becomes
\begin{equation}\label{q6}
  \begin{aligned}
  \min_{\widetilde{W},\widetilde{P}}~~&f_1(\widetilde{W};\lambda)+f_2(\widetilde{W})+\xi(\widetilde{P})\\
  {\rm s.t.}~~&\mathcal{A}\widetilde{W}+\mathcal{B}\widetilde{P}=0
  \end{aligned}
\end{equation}
with
\begin{align*}
&f_{1}(\widetilde{W};\lambda)=\langle {\rm vec}(R),\widetilde{W}\rangle+ \frac{1}{2\lambda}\Vert\widetilde{W}-\widetilde{W}_k\Vert^2,\\
&f_2(\widetilde{W})=\delta_{\Gamma_+^p}(\widetilde{W})+\delta_{\Gamma_+^n}(\widetilde{\Psi}_1)+\dots+\delta_{\Gamma_+^n}(\widetilde{\Psi}_M),\\
&\xi(\widetilde{P})=\gamma y_k^\top\vert\widetilde{P}\vert,\\
&f(\widetilde{W})=f_1(\widetilde{W})+f_2(\widetilde{W}).
\end{align*}
Interestingly, $\xi(\widetilde{P})$ is nothing but a weighted-$\ell_1$-norm with weight values given by $y_k$. Hence, compared to optimization problem (\ref{problem2}), the only difference is the smooth and strongly convex term $\frac{1}{2\lambda}\Vert\widetilde{W}-\widetilde{W}_k\Vert^2$ in $f_1(\widetilde{W};\lambda)$. Thus, we can utilize optimization scheme (\ref{Luo}) by slightly modifying Algorithm \ref{alg2} (just change $w_{ij}$ to $y_k^{(ij)}$, where $y_k^{(ij)}$ is the $(i,j)$-th element of $y_k$).

\begin{breakablealgorithm}
  \caption{$\ell_0$ Sparse-Feedback LQ}
  \begin{algorithmic}
  \State \textbf{Given} : Feasible initial point $X_0=(\widetilde{W}_0,y_0)\in\mathcal{X}_1\times \mathbb{R}_+^{mn},\lambda>0,\gamma>0,\sigma_0\leq1,\alpha\in(0,1)$
  \State \textbf{Result}: $X_k=(\widetilde{W}_k,y_k)$
  \For{$i=1,2,3\dots$}
  \State Set $\sigma_i=\alpha\sigma_{i-1}$
  \State Set $k=0$
  \For{$k=0,1,2,...$}
      \State Let $k=k+1,i=(k\mod 2)+1$
      \If{$i==1$}
      \State $\widetilde{W}_k\leftarrow$ Algorithm \ref{alg1}
      \State Set $y_k=y_{k-1},X_k=(\widetilde{W}_k,y_k)$
      \EndIf
      \If{$i==0$}
      \State Update $y_k=\nabla f_{\sigma_i}\left(\left\vert(V_2\otimes V_1)\widetilde{W}_k\right\vert\right)$
      \State Set $\widetilde{W}_k=\widetilde{W}_{k-1},X_k=(\widetilde{W}_k,y_k)$
     \EndIf
      \If{Stopping Criterion$==$True}
        \State \textbf{Break}
      \EndIf
  \EndFor
  \State Set $X_0=X_k$
  \EndFor
  \end{algorithmic}
  \label{alg5}
\end{breakablealgorithm}

\begin{remark}
In fact, if $u_{1,\sigma}(\widetilde{W},X_k)$ is selected as
\begin{equation*}
u_{1,\sigma}(\widetilde{W},X_k)=\langle {\rm vec}(R),\widetilde{W}\rangle+r_{2,\sigma}(y_k)+h(\widetilde{W},y_k),
\end{equation*}
the iterative sequence of the algorithm BSUM can also converge to the coordinatewise minimizers of $H_{\sigma}(\widetilde{W},y)$. However, we introduce the proximal term $\frac{1}{2\lambda}\Vert\widetilde{W}-\widetilde{W}_k\Vert^2$ into $u_{1,\sigma}$, which appears to be meaningless or potentially resulting in a decelerated convergence of the BSUM algorithm. Empirically, the proximal term  makes the objective of (\ref{q6}) be strongly convex and accelerate the solving for subproblem (\ref{q6}). Therefore, a trade-off exists between the convergence rate of the algorithm BSUM for optimization problem (\ref{P''}) and the convergence rate of (\ref{Luo}) for subproblem (\ref{q6}).
\end{remark}

Building upon the previous discussions, we have introduced a method with significant theoretical complexity. However, it should be noted that there is no theoretical evidence to support its superiority over Algorithm \ref{alg2} (or Algorithm \ref{alg3}). Therefore, this section primarily focuses on theoretical aspects, particularly the variational properties. For practical purposes, especially when seeking sparse controllers, we recommend the use of Algorithm \ref{alg2} (or Algorithm \ref{alg3}).

\section{Connection between Sparse Feedback and Distributed Controller}\label{section_link}

Distributed LQ problems, with the advantages of scalability, robustness, flexibility, and efficiency, have been widely studied in the existing literature; see \cite{i9,i12,b6,i10,i11} and references therein for examples. In this section, we will establish the relationship between sparse-feedback LQ and distributed controller and demonstrate that the sparse-feedback LQ problem may degenerate to the distributed LQ problem to some extents.

Generally, a sparse (or group-sparse) feedback gain $K$ can lead to a multi-agent distributed control system. For example, given a group-sparse feedback gain
  \begin{equation*}
    K=\begin{bmatrix}
      K_{11} & \cdots & K_{1q}\\
      \vdots & \ddots & \vdots\\
      K_{p1} & \cdots & K_{pq}
    \end{bmatrix}\in\mathbb{R}^{m\times n}
  \end{equation*}
  with $K_{ij}\in\mathbb{R}^{m_i\times n_j}$ and $\sum_{i=1}^{p} m_i=m,\sum_{j=1}^{q}n_j=n$, it can induce a $p$-agents distributed system.
  Specifically, the global control action is composed of local control actions: $u(t)=[u_1(t)^\top,\dots,u_p(t)^\top]^\top$, where $u_i(t)\in\mathbb{R}^{m_i}$ is the control input of
  agent $i$. At time $t$, agent $i$ directly observes a partial state $x_{\mathcal{I}_i}(t)$, and $\mathcal{I}_i$ is given by
  \begin{equation*}
    \mathcal{I}_i=\bigcup_{j=1,\dots,q,~K_{ij}\neq 0}\left\{n_{j-1}+1,\dots,n_j\right\}
  \end{equation*}
  with $n_0=0$.
  Here, $\mathcal{I}_i$ is a fixed subset of $[n]\doteq\{1,\dots, n\}$ and $x_{\mathcal{I}_i}(t)$ denotes the subvector of $x(t)$ with indices in $\mathcal{I}_i$.
  That is why in this paper we investigate the distributed LQ problem by studying optimization problem
  \begin{equation}\label{why_sparse_opt}
    \begin{aligned}
      \min_{W\in\mathbb{S}^p}~~&\langle R,W\rangle +\gamma s(V_1WV_2^\top)\\
      {\rm s.t.}~~&W\in\mathscr{C},
    \end{aligned}
\end{equation}
where $s(\cdot)$ is the approximation of $\ell_0$-norm.
It is important to note that the $p$-agents distributed systems in \cite{ref1,i12,i9} satisfy
\begin{equation*}
  \mathcal{I}_i \cap \mathcal{I}_j =\emptyset,~~\forall i,j\in[p],i\neq j.
\end{equation*}
However, $\mathcal{I}_i$ and $\mathcal{I}_j$ may overlap in our setting, and hence the considered problem of this paper is a  generalization of those of \cite{ref1,i12,i9}.

In addition, in practical applications of distributed system, two agents are often unable to communicate due to long distances or interference, i.e., the feedback gain
\begin{equation*}
  K\in\mathcal{K}\doteq\{K\in\mathbb{R}^{m\times n}\colon K_{ij}=0 \text{ for all } (i,j)\in\mathcal{U}\}
\end{equation*}
with given subset $\mathcal{U}\subseteq [m]\times [n]$. It is worth noting that our sparse-feedback framework is able to cover such situation by modifying problem \eqref{sparse_feedback_4} as
\begin{equation}\label{fixed}
  \begin{aligned}
  \min_{W,P}~~&\langle R,W\rangle\\
  {\rm s.t.}~~&\mathcal{G}(W)\in\mathcal{K},\\
  &[V_1WV_2^\top]_{ij}=0,~(i,j)\in\mathcal{U}\subseteq [m]\times [n],
  \end{aligned}
\end{equation}
with $[\cdot]_{ij}$ the $(i,j)$-th element of a matrix. By solving problem \eqref{fixed}, the controller with fixed communication topology can be obtained, and distributed LQ problems with fixed communication topology have been widely studied from different viewpoints \cite{i10,i11,furieri2020sparsity,rotkowitz2005characterization}. However, our paper is of independent interests for problems with  fixed communication topology. Specifically, this paper provides an optimal-guarantee w.r.t. \eqref{fixed} rather than heuristic method \cite{i10} or stationary-point-guarantee \cite{i11}. Additionally, the proposed problem \eqref{fixed} is finite-dimensional, in comparison to the infinite-dimensional optimization  \cite{furieri2020sparsity,rotkowitz2005characterization}. Moreover, if we only hope that agent $i_k$ cannot observe the state $x_{j_k}(t)$ ($k=1,\dots,p$) directly rather than giving a fixed communication topology, we can modify problem \eqref{fixed} as
\begin{equation}\label{fixed2}
  \begin{aligned}
  \min_{W,P}~~&\langle R,W\rangle+\gamma s(P)\\
  {\rm s.t.}~~&\mathcal{G}(W)\in\mathcal{K},\\
  &V_1WV_2^\top-P=0,\\
  &[V_1WV_2^\top]_{i_kj_k}=0,~k=1,\dots,p\\
  \end{aligned}
\end{equation}
with $(i_k,j_k)\in[m]\times [n]$ for $k=1,\dots,p$.
To the best of the authors' knowledge, such special formulation, i.e., the design of a sparse-feedback gain $K$ with forcing $[K]_{i_kj_k}=0$ for $k=1,\dots,p$, has not been reported in the existing literature.
It is worthwhile to underline that problem \eqref{fixed2} can be solved analogously by the proposed algorithms in Section \ref{section3}.

Therefore, although the focus of this paper is on the sparse-feedback LQ problem, the methodologies presented also make contributions to the distributed LQ problem.

\section{Numerical Examples}
In this section, a few numerical examples are represented that illustrate the theoretical results of this paper, {where the noise $w$ (see (\ref{expsys})) is characterized by a impulse disturbance vector.}

\textbf{Example 1: }Consider $x=[x_1,x_2,x_3]^\top$ and a linear system
\begin{equation}\label{expsys}
\begin{aligned}
\dot{x}&-Ax+B_2u+B_1w,\\
z&=Cx+Du,\\
u&=-Kx,
\end{aligned}
\end{equation}
where
\begin{equation*}
  \begin{aligned}
  &A=\begin{bmatrix}
  0.2220 & 0.9186 & 0.7659\\
  0.8707 & 0.4884 & 0.5184\\
  0.2067 & 0.6117 & 0.2968
  \end{bmatrix},B_1=I_3, C=\begin{bmatrix}
  1 & 0 & 0\\
  0 & 0 & 0\\
  0 & 0 & 0
  \end{bmatrix},\\
  &B_2=\begin{bmatrix}
  0.9315 & 0.7939\\
  0.9722 & 0.1061\\
  0.5317 & 0.7750
  \end{bmatrix}, D=\begin{bmatrix}
  0 & 0\\
  1 & 0\\
  0 & 1
  \end{bmatrix}.
  \end{aligned}
\end{equation*}
By Algorithm \ref{alg2}, we can obtain the relation between $\gamma$ and $N$ (the number of $0$ in feedback gain $K$; see Fig. \ref{fig2}).

  \begin{table}[]
    \centering
    \begin{tabular}{|c|c|c|c|c|c|c|}
    \hline
     & $\gamma=0$ & $\gamma=1$ & $\gamma=5$ &$\gamma=10$ & $\gamma=20$ & $\gamma=50$ \\ \hline
    $J(K)$ & 1.92 & 3.51 & 5.78 & 6.56 &7.20 &7.91 \\ \hline
    $N$ & 0&1 &2 & 3&3&3\\   \hline
    \end{tabular}
    \caption{Relation between $\gamma$ and $J(K)$}
    \label{tabel1}
  \end{table}

\begin{figure}[htbp]
\centerline{\includegraphics[width=0.30\textwidth]{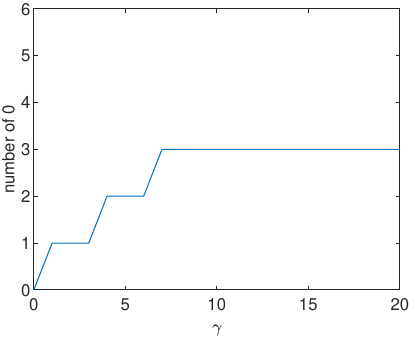}}
\caption{Relation between $\gamma$ and $N$}
\label{fig2}
\end{figure}
{
Remarkably, if $(A,B_2)$ is known, we have $\langle R,W\rangle=J(K)$ rather than an upper bound of $J(K)$; see Remark \ref{explain_ass12} above.
The TABLE \ref{tabel1} illustrates the relation between $\gamma$ and LQ cost $J(K)$, and we can observe that $J(K)$ increases with the increase of $\gamma$.
In particular, when $\gamma=0$, the problem studied in this example degenerates to classic centralized LQ problem, which can be solved by letting $\gamma\to 0$ and utilizing Algorithm \ref{alg2} (or Algorithm \ref{alg3}; see Proposition \ref{gammato0}) or solving Algebraic Riccati Equation.
As illustrated in Fig. \ref{fig2} and TABLE \ref{tabel1}, the parameter $\gamma$ serves as the balance between LQ cost $J(K)$ and the level of sparsity of feedback gain $K$.
Additionally, the LQ cost changes very little and the level of sparsity $N$ remains unchange when $\gamma$ is large enough; which implies if we only need a sufficiently sparse-feedback gain $K$,
we can simply let $\gamma$ sufficiently large.
}

Specifically, when $\gamma=10$, we can obtain the following result (see Fig. \ref{fig3}).
{
Denoting
\begin{equation*}
  W_k=\mathrm{vec}^{-1}(\widetilde{W}_k)=\begin{bmatrix}
    W_{1,k} & W_{2,k}\\
    W_{2,k}^\top &W_{3,k},
  \end{bmatrix},
\end{equation*}
}\noindent
{
where $\{\widetilde{W}_k\}$ is the iteration sequence of Algorithm \ref{alg2}, we have that
\begin{equation*}
  K_k=W_{2,k}^\top W_{1,k}^{-1}=\begin{bmatrix}
    K_{11,k} & K_{12,k} &  K_{13,k}\\
    K_{21,k} & K_{22,k} &  K_{23,k}
    \end{bmatrix}
\end{equation*}
converges to the stabilizing feedback gain by Theorem \ref{thm_para} and Corollary \ref{thm_l_1},
and indeed the colored graphs in Fig. \ref{fig3} indicate the convergence of $K_{ij,k}$ for $i=1,2$, $j=1,2,3$.
}
\begin{figure}[htbp]
\centerline{\includegraphics[width=0.3\textwidth]{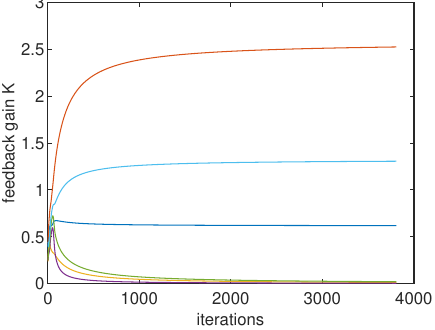}}
\caption{Feasible gain with $\gamma=10$}
\label{fig3}
\end{figure}

The optimal sparse-feedback gain $K$ is given by
\begin{equation*}
  K=\begin{bmatrix}
      0.6192  &  2.5269 &   0\\
    0  &  0  &  1.3068

    \end{bmatrix}.
\end{equation*}
The responses of all the state variables are illustrated in Fig. \ref{fig4}, and it can be seen that the stability of system is guaranteed.
\begin{figure}[htbp]
\centerline{\includegraphics[width=0.3\textwidth]{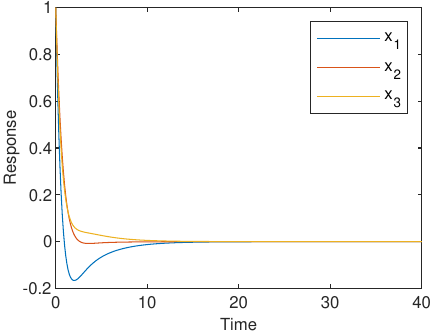}}
\caption{System response}
\label{fig4}
\end{figure}

\textbf{Example 2: }Consider the linear system (\ref{expsys}) with
\begin{equation*}
  \begin{aligned}
  &A=\begin{bmatrix}
  0.3079 & 0.1879 & 0.1797 & 0.2935 & 0.6537\\
  0.5194 & 0.2695 & 0.5388 & 0.9624 & 0.5366\\
  0.7683 & 0.4962 & 0.2828 & 0.9132 & 0.9957\\
  0.7892 & 0.7391 & 0.7609 & 0.5682 & 0.1420\\
  0.8706 & 0.1950 & 0.2697 & 0.4855 & 0.9753
  \end{bmatrix},\\
  &B_2=\begin{bmatrix}
  0.6196 & 0.6414\\
  0.7205 & 0.9233\\
  0.2951 & 0.8887\\
  0.6001 & 0.6447\\
  0.7506 & 0.2956
  \end{bmatrix},\\
  & B_1=I_5,C=\begin{bmatrix}
  1 & 0 & 0 & 0 & 0\\
  0 & 1 & 0 & 0 & 0\\
  0 & 0 & 0 & 0 & 0\\
  0 & 0 & 0 & 0 & 0\\
  0 & 0 & 0 & 0 & 0
  \end{bmatrix},D=\begin{bmatrix}
  0 & 0\\
  0 & 0\\
  1 & 0\\
  0 & 1\\
  0 & 0
  \end{bmatrix}.
  \end{aligned}
\end{equation*}
Letting $\gamma=10$, the optimal sparse-feedback gain $K$ is given by
\begin{equation*}
  K=\begin{bmatrix}
  1.3355 & 0 & 0 &  0 & 5.5233\\
    0.8946 &  1.8952  &  1.0401  & 3.8119 &  0
  \end{bmatrix}.
\end{equation*}
The responses of all the state variables are illustrated in Fig. \ref{fig5}, and the stability of system is guaranteed.
\begin{figure}[htbp]
\centerline{\includegraphics[width=0.3\textwidth]{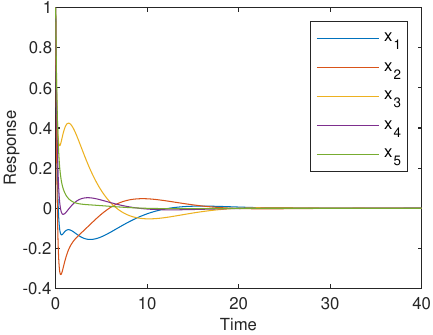}}
\caption{System response}
\label{fig5}
\end{figure}

\textbf{Example 3: }Consider the linear system (\ref{expsys}) with
\begin{equation*}
  \begin{aligned}
  &A=\begin{bmatrix}
  0 & 1 & 0\\
  0 & 0 & 1\\
  0 & 0 & 0
  \end{bmatrix},B_1=I_3,B_2=\begin{bmatrix}
  0.9315 & 0.7939\\
  0.9722 & 0.1061\\
  0.5317 & 0.7750
  \end{bmatrix},\\
  &C=\begin{bmatrix}
  1 & 0 & 0\\
  0 & 0 & 0\\
  0 & 0 & 0
  \end{bmatrix},D=\begin{bmatrix}
  0 & 0\\
  1 & 0\\
  0 & 1
  \end{bmatrix}.
  \end{aligned}
\end{equation*}
We utilize  Algorithm \ref{alg2} and Algorithm \ref{alg3} with  $\gamma=10$, respectively, and the results are shown below (see Fig. \ref{fig6} and Fig. \ref{fig7}).
\begin{figure}[htbp]
\centerline{\includegraphics[width=0.3\textwidth]{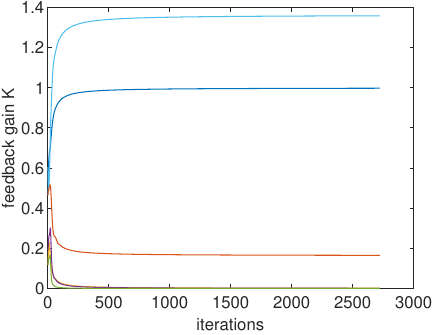}}
\caption{Convergence of Algorithm \ref{alg2}}
\label{fig6}
\end{figure}
\begin{figure}[htbp]
\centerline{\includegraphics[width=0.3\textwidth]{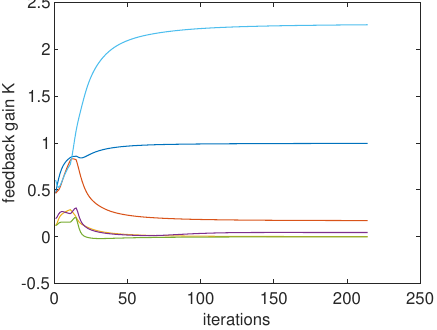}}
\caption{Convergence of Algorithm \ref{alg3}}
\label{fig7}
\end{figure}

{
By using Algorithms \ref{alg2} and \ref{alg3}, denote the obtained feedback gain as $K_1, K_2$, respectively; we have $J(K_1)=8.81$ and $J(K_2)=9.39$.
Moreover, it holds $\Vert K_1\Vert_0=3$ and $\Vert K_2\Vert_0=4$.
Generally, the obtained LQ cost $J(K_1)$ is slightly smaller than the one by Algorithm \ref{alg3}; this is mainly caused by $g_{\mathcal{Q}}(x)\gg g(x)$  when $\Vert x\Vert$ is large.
Additionally, the obtained feedback gains exhibit nearly the same level of sparsity by using Algorithms \ref{alg2} and \ref{alg3},
however, the required iteration numbers could be really different.
As shown in Fig. \ref{fig6} and \ref{fig7}, we can observe that Algorithm \ref{alg2} converges within 800 steps, while Algorithm \ref{alg3} converges within only 80 steps.
That is because the Algorithm \ref{alg2} exhibits $\mathcal{O}(1/k)$ convergence rate, while Algorithm \ref{alg3} exhibits $\mathcal{O}(1/k^2)$ convergence rate.
}

\section{Conclusion}
This work investigates the $\mathcal{H}_2$-guaranteed cost sparse-feedback control problem under convex parameterization and convex-bounded uncertainty, specifically focusing on the inclusion of a sparsity constraint on the feedback gain.
The formulation of approximate separable constraint optimization problems is presented, and a convex optimization framework is suggested to achieve the global optimizer for these problems.
In the near future, an examination will be conducted on sparse-feedback LQ problems with nonconvex optimization perspective.

\bibliographystyle{plain}
\bibliography{tac_7.9}
\end{document}